\newtheorem{theorem}{Theorem}
\newtheorem{corollary}[theorem]{Corollary}
\newtheorem{lemma}[theorem]{Lemma}
\newtheorem{question}[theorem]{Question}
\newtheorem{conjecture}[theorem]{Conjecture}
\theoremstyle{definition}
\newtheorem{example}[theorem]{Example}
\newtheorem{remark}[theorem]{Remark}
\newcommand{\rank}{\mathrm{rank}}
\newcommand{\G}{\mathcal{G}}
\renewcommand{\Cup}{\bigcup}
\renewcommand{\setminus}{-}
\newcommand{\st}{\colon\,}
\newcommand{\gal}{\mathrm{lpt}}
\newcommand{\lpt}{\mathrm{lpt}}
\newcommand{\abs}[1]{| #1 |}
\newcommand{\dist}{\mathrm{dist}}
\newcommand{\set}[1]{\left \{#1 \right\} }
\renewcommand{\emptyset}{\varnothing}
\newcommand{\comment}[1]{}
\newtheorem{proposition}[theorem]{Proposition}
\begin{document}

\author[1]{James A. Long Jr.\thanks{jalong@mix.wvu.edu}}
\author[1]{Kevin G. Milans\thanks{milans@math.wvu.edu}}
\author[2]{Andrea Munaro\thanks{andrea.munaro@unipr.it}}
\affil[1]{Department of Mathematics, West Virginia University, USA}
\affil[2]{Department of Mathematical, Physical and Computer Sciences, University of Parma, Italy}

\title{Non-empty intersection of longest paths in $H$-free graphs}
\date{\today}

\maketitle

\begin{abstract}We make progress toward a characterization of the graphs $H$ such that every connected $H$-free graph has a longest path transversal of size $1$. In particular, we show that the graphs $H$ on at most $4$ vertices satisfying this property are exactly the linear forests. We also show that if the order of a connected graph $G$ is large relative to its connectivity $\kappa(G)$, and its independence number $\alpha(G)$ satisfies $\alpha(G) \le \kappa(G) + 2$, then each vertex of maximum degree forms a longest path transversal of size $1$.
\end{abstract}

\section{Introduction}\label{intro}

It is a classic result in graph theory that every two longest paths in a connected graph share at least one vertex. \citet{Gal68} asked whether in fact all longest paths in a connected graph share at least one vertex. This was answered in the negative by \citet{Wal69}, who provided a counterexample with $25$ vertices. A counterexample with $12$ vertices was later constructed by \citet{WV74} and, independently, by \citet{Zam76} (see \Cref{fig:counter}). Brinkmann and Van Cleemput (see \citep{SZZ13}) verified that there is no counterexample with less than $12$ vertices. 

\begin{figure}[h!]
\centering 
\includegraphics[scale=0.8]{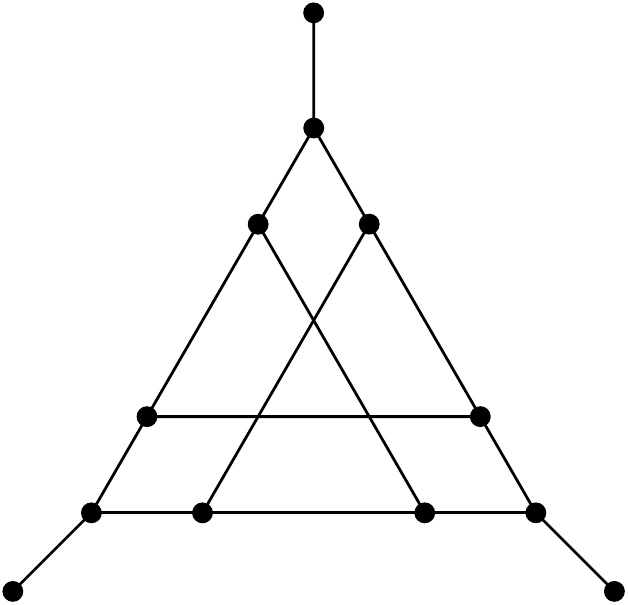}
\caption{The graph $G_0$: A $12$-vertex graph with no Gallai vertex.}
\label{fig:counter}
\end{figure}

A \emph{Gallai set} (or \emph{longest path transversal}) in a graph $G$ is a set of vertices $S$ such that every longest path in $G$ has a vertex in $S$.  The \emph{Gallai number} or \emph{longest path transversal number} of $G$, denoted by $\gal(G)$, is the minimum size of a Gallai set and a \emph{Gallai family} is a family of graphs $\G$ such that $\gal(G)=1$ for each connected graph $G\in \G$.  A vertex $v$ in $G$ is a \emph{Gallai vertex} if $\{v\}$ is a Gallai set and a graph is \emph{Gallai} if it has a Gallai vertex.

The counterexamples mentioned above consist of connected graphs $G$ for which \linebreak $\lpt(G) = 2$. In fact, there are examples of connected graphs $G$ for which $\lpt(G) = 3$ \citep{Gru74,Zam76} and \Citet{Wal69} and \citet{Zam72} asked if the Gallai number of connected graphs is bounded. In a companion paper \cite{LMM21} we addressed this fifty-year-old question. Improving on \cite{RS14}, we showed that connected graphs admit sublinear longest path transversals. The gap between our upper bound and the constant lower bound $3$ remains large. 

In this paper we focus on another natural variant of Gallai's question: Which classes of graphs form Gallai families? It is well known that a family of pairwise intersecting subtrees of a tree has non-empty intersection; in particular, trees form a Gallai family.  Several other Gallai families have been identified: split graphs and cacti \citep{KP90}, circular-arc graphs \citep{BGLS04,Joos15}, series-parallel graphs \citep{CEF17},  graphs with matching number at most $3$ \citep{Chen15}, dually chordal graphs \citep{JKLW16}, $2K_2$-free graphs \citep{GS18}, $P_4$-sparse graphs and $(P_5, K_{1,3})$-free graphs \citep{CL20}, bipartite permutation graphs \citep{CFG20}, $(H_1, H_2)$-free graphs such that $H_1$ and $H_2$ are connected and every $2$-connected $(H_1, H_2)$-free graph is Hamiltonian (all such pairs are known and each includes $K_{1,3}$) \citep{GS19}.

\newcommand{\Free}{\mathrm{Free}}
Let $\Free(H)$ be the class of $H$-free graphs.  A \emph{monogenic class} of graphs has the form $\Free(H)$, for some graph $H$. In this paper we aim at characterizing monogenic Gallai families. In \Cref{monogenic}, we make progress by showing that if $\Free(H)$ is a Gallai family, then $H$ is a linear forest, and this suffices when $|V(H)|\le 4$. In the spirit of \citep{GS18}, we in fact prove something more general: if $H$ is a linear forest on at most $4$ vertices and $G$ is a connected $H$-free graph, then all maximum degree vertices in $G$ are Gallai. Dichotomies in monogenic classes for structural and algorithmic graph properties have been the subject of several studies. For example, they have been provided for properties such as Hamiltonicity \citep{FG97,LV17}, boundedness of clique-width \citep{DP16}, price of connectivity \citep{BHKP17,HJMP16}, and polynomial-time solvability of various algorithmic problems \citep{GPS14,Kor92,KKTW01,Mun17}. In \Cref{sec:5P1}, we show that if $G$ is a connected graph with independence number $\alpha(G)\le 4$ (i.e., $G$ is $5P_1$-free), then $G$ is Gallai. We then conjecture that the same holds if $\alpha(G) \leq 5$.

A celebrated result of \Citet{CE72} asserts that a graph $G$ has a Hamiltonian cycle when $|V(G)| \ge 3$ and $\alpha(G) \le \kappa(G)$, and that $G$ has a Hamiltonian path when $\alpha(G) \le \kappa(G)+1$.  It follows that every vertex in $G$ is Gallai when $\alpha(G)\le \kappa(G) + 1$. In \Cref{sec:CELR}, we show that if a connected graph $G$ is large relative to its connectivity $\kappa(G)$ and $\alpha(G) \le \kappa(G) + 2$, then each vertex of maximum degree is a Gallai vertex.  Moreover, for each $k\ge 1$, we provide an infinite family of $k$-connected graphs $G$ such that $\alpha(G)\le k+3$ but no maximum degree vertex in $G$ is Gallai (see \Cref{Ex:BestPossible}). Our result has the following immediate consequence: if a regular graph $G$ is large relative to its connectivity and $\alpha(G) \leq \kappa(G)+2$, then $G$ contains a Hamiltonian path. The condition $\alpha(G) \leq \kappa(G)+2$ is best possible up to an additive factor of $2$ (this follows from a construction in  \citep{CO13}, see \Cref{Ex:HamReg}).

\section{Preliminaries}\label{chap:prel}
In this paper we consider only finite graphs. Given a graph $G$, we denote its vertex set by $V(G)$ and its edge set by $E(G)$. 

\vspace{0.3cm}

\textbf{Neighborhoods and degrees.} For a vertex $v \in V(G)$, the \textit{neighborhood} $N_{G}(v)$ is the set of vertices adjacent to $v$ in $G$.  For a set of vertices $S\subseteq V(G)$, the \textit{neighborhood} of $S$, denoted $N_G(S)$, is $\Cup_{v\in S} N_G(v)$.  We also extend the concept of neighborhood to subgraphs by defining $N_G(H)=N_G(V(H))$ when $H$ is a subgraph of $G$.  The \textit{degree} $d_{G}(v)$ of a vertex $v \in V(G)$ is the number of edges incident to $v$ in $G$.  When $G$ is clear from context, we may write $d(v)$ for $d_G(v)$.  A vertex $v\in V(G)$ with $d(v)=3$ is \emph{cubic}. The \textit{maximum degree} $\Delta(G)$ of $G$ is  $\max\left\{d_{G}(v): v \in V\right\}$. Similarly, the \textit{minimum degree} $\delta(G)$ of $G$ is $\min\left\{d_{G}(v): v \in V\right\}$. 

\vspace{0.3cm}

\textbf{Paths and cycles.}  A \textit{path} is a non-empty graph $P$ with $V(P) = \left\{x_{0}, x_{1}, \dots, x_{k}\right\}$ and $E(P) = \left\{x_{0}x_{1}, x_{1}x_{2}, \dots, x_{k-1}x_{k}\right\}$.  We may also denote $P$ by listing its vertices in the natural order $x_0x_1\cdots x_k$.  The vertices $x_{0}$ and $x_{k}$ are the \textit{ends} or \emph{endpoints} of $P$; the other vertices are \textit{interior} vertices of $P$. The \textit{length} of $P$ is the number of edges in $P$.  We denote the $n$-vertex path by $P_n$. A path in a graph $G$ is \textit{Hamiltonian}, or \textit{spanning}, if it contains all vertices of $G$. A $uv$-path is a path whose endpoints are $u$ and $v$. 
If $P = x_0x_1\cdots x_k$ is a path and $k \geq 2$, the graph with vertex set $V(P)$ and edge set $E(P) \cup {x_k x_0}$ is a \textit{cycle}. The \emph{length} of a cycle is the number of its edges (or vertices) and the cycle on $n$ vertices is denoted by $C_{n}$. A cycle in a graph $G$ is \textit{Hamiltonian}, or \textit{spanning}, if it contains all vertices of $G$.  The \textit{girth} of a graph containing a cycle is the length of a shortest cycle and a graph with no cycle has infinite girth. The \emph{distance} $\dist_G(u, v)$ from a vertex $u$ to a vertex $v$ in a graph $G$ is the length of a shortest path between $u$ and $v$. 

\vspace{0.3cm}

\textbf{Graph operations.} Let $G$ be a graph and let $S\subseteq V(G)$.  The graph $G-S$ is obtained from $G$ by deleting all vertices in $S$ and all edges incident to a vertex in $S$.  The subgraph of $G$ induced by a set of vertices $S'$, denoted $G[S']$, is the graph $G-S$, where $S=V(G)-S'$.  For $M\subseteq E(G)$, we define $G-M$ analogously. The \textit{union} of simple graphs $G$ and $H$ is denoted $G \cup H$ and has vertex set $V(G) \cup V(H)$ and edge set $E(G) \cup E(H)$.  The \emph{disjoint union} of $G$ and $H$, denoted $G+H$, is the union of a copy of $G$ and a copy of $H$ on disjoint vertex sets.  The disjoint union of $k$ copies of $G$ is denoted by $kG$.  
     
\vspace{0.3cm}  

\textbf{Graph classes and special graphs.} If a graph does not contain induced subgraphs isomorphic to graphs in a set $Z$, it is \textit{$Z$-free} and the set of all $Z$-free graphs is denoted by $\Free(Z)$. A \textit{complete graph} is a graph whose vertices are pairwise adjacent and the complete graph on $n$ vertices is denoted by $K_{n}$.  A \textit{triangle} is the graph $K_{3}$. A graph $G$ is \textit{$r$-partite}, for $r \geq 2$, if its vertex set admits a partition into $r$ classes such that every edge has its endpoints in different classes. An $r$-partite graph in which every two vertices from distinct parts are adjacent is called \textit{complete} and $2$-partite graphs are usually called \textit{bipartite}. An \textit{$(X,Y)$-bigraph} is a bipartite graph with bipartition $\set{X, Y}$. Given a graph $G$ and $X, Y \subseteq V(G)$, the \textit{induced $(X, Y)$-bigraph} is the bipartite subgraph of $G$ with vertex set $X \cup Y$ and where each edge has one endpoint in $X$ and the other in $Y$.  A \textit{tree} is a connected graph not containing any cycle as a subgraph and the vertices of degree $1$ are its \textit{leaves}. 
           
\vspace{0.3cm}           
           
\textbf{Graph parameters.} A set of vertices or edges of a graph is \textit{maximum} with respect to the property $\mathcal{P}$ if it has maximum size among all subsets having property $\mathcal{P}$. An \textit{independent set} of a graph is a set of pairwise non-adjacent vertices and the \textit{independence number} $\alpha(G)$ is the size of a maximum independent set of $G$. A \textit{clique} of a graph is a set of pairwise adjacent vertices. A \emph{matching} in $G$ is a set of edges with distinct endpoints.  A matching $M$ \emph{saturates} a set of vertices $S$ if each vertex in $S$ is the endpoint of an edge in $M$.  A graph $G$ is \emph{$k$-connected} if $|V(G)| > k$ and $G-S$ is connected for each $S\subseteq V(G)$ with $|S|<k$.   The \emph{connectivity} of $G$, denoted $\kappa(G)$, is the maximum $k$ such that $G$ is $k$-connected.

\newcommand{\vep}{\varepsilon}
\newcommand{\mst}{\tau}

\section{Monogenic Gallai families}\label{monogenic}

In this section we make progress toward a classification of monogenic Gallai families. We first show that a necessary condition for a monogenic family $\Free(H)$ to be Gallai is that $H$ is a linear forest on at most $9$ vertices, where a \emph{linear forest} is a forest in which every component is a path.  Let $G_0$ be the graph in \Cref{fig:counter} with $\lpt(G_0)=2$ \citep{WV74,Zam76}. We obtain necessary conditions on monogenic Gallai families by subdividing edges or replacing cubic vertices with triangles in $G_0$ to obtain new counterexamples with arbitrarily large girth or no induced claw, respectively. 

In the following, we say that a graph $H$ is a \emph{fixer} if $\Free(H)$ is a Gallai family; that is, forbidding $H$ ``fixes'' the answer to Gallai's question.  

\begin{proposition}\label{prop:linforest}
If $H$ is a fixer, then $H$ is a linear forest on at most 9 vertices.
\end{proposition}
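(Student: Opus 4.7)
The plan is to prove the contrapositive in two parts, both building on the $12$-vertex graph $G_0$ of \Cref{fig:counter}, for which $\lpt(G_0) = 2$. Part one shows that a fixer must be a linear forest, and part two bounds its order.

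For part one, suppose $H$ is not a linear forest. Then either $H$ contains a cycle, or $H$ is a forest with a vertex of degree at least $3$. If $H$ contains a cycle, let $g$ denote its girth, so $H$ has an induced $C_g$. Let $G^{(k)}$ denote the graph obtained from $G_0$ by replacing each edge with a path of $k+1$ edges. Each cycle of $G_0$ of length $\ell$ becomes a cycle of $G^{(k)}$ of length $\ell(k+1)$, so the girth of $G^{(k)}$ is $(k+1)$ times the girth of $G_0$, and for $k$ large enough $G^{(k)}$ has girth exceeding $g$, hence contains no induced $C_g$ and so is $H$-free. The key verification is that $\lpt(G^{(k)}) \ge 2$: via the natural correspondence that contracts each subdivision path back to its original edge, every longest path of $G^{(k)}$ projects to a longest path of $G_0$, so any single-vertex transversal in $G^{(k)}$ would yield a single-vertex transversal in $G_0$ (either directly, if the transversal is an original vertex, or via either endpoint of the corresponding edge, if the transversal is a subdivision vertex), contradicting $\lpt(G_0) = 2$.

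If instead $H$ is a forest with a vertex of degree at least $3$, then since $H$ is triangle-free the three neighbors of such a vertex form an independent set, so $H$ contains an induced $K_{1,3}$. Because $G_0$ is cubic, I would apply the standard triangle-replacement operation: replace each vertex $v$ with neighbors $u_1, u_2, u_3$ by a triangle on $\{v_1, v_2, v_3\}$ with $v_i$ adjacent to the corresponding vertex in the triangle replacing $u_i$. The resulting graph $G'$ is claw-free, hence $H$-free. Once again $\lpt(G') \ge 2$ follows from a correspondence between longest paths of $G'$ and $G_0$: each longest path of $G'$ visits each triangle along a subpath of length one or two and contracts to a longest path of $G_0$, so any Gallai vertex of $G'$ would force a Gallai vertex of $G_0$ (namely, the original vertex whose triangle contains it), contradicting $\lpt(G_0) = 2$.

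For part two, suppose $H$ is a linear forest with $|V(H)| \ge 10$. I claim that $G_0$ itself is $H$-free, which suffices. It is enough to show that $G_0$ has no induced linear forest on $10$ or more vertices. Let $S \subseteq V(G_0)$; for $G_0 - S$ to have maximum degree at most $2$ (a necessary condition for being a linear forest), each vertex of $G_0 - S$ must have at least one neighbor in $S$. Therefore $12 - |S| = |V(G_0) - S| \le |N(S) - S| \le 3|S|$, where the last inequality uses that $G_0$ is cubic. This forces $|S| \ge 3$, so $|V(G_0) - S| \le 9$, as claimed.

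The principal obstacle is the rigorous verification that $\lpt \ge 2$ is preserved by edge subdivision and by cubic-to-triangle replacement. For each operation one must carefully set up the correspondence between longest paths before and after the modification and rule out longer paths arising from subdivision vertices revisiting a triangle or traversing a subdivision path non-monotonically. Once those correspondences are in place, the dichotomy on $H$ and the short counting argument for induced linear forests of $G_0$ conclude the proof.
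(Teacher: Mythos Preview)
Your overall plan mirrors the paper's, but both of your Part-one constructions actually have Gallai vertices, so neither witnesses $\lpt\ge 2$.

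First a factual correction: $G_0$ is \emph{not} cubic. It has three vertices of degree~$1$ (the set $R$ obtained by splitting a Petersen vertex) and nine cubic vertices. This already breaks your Part-two counting: the degree-$1$ vertices need no neighbor in $S$, so ``$12-|S|\le 3|S|$'' is unjustified. The easy repair---apply the constraint only to the nine cubic vertices, giving $9-|S|\le 3|S|$---still yields $|S|\ge 3$, so that part is salvageable.

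The serious gap is in Part one. Consider your uniform subdivision $G^{(k)}$. A longest path of $G_0$ with one endpoint in $R$ and one cubic endpoint $c$ necessarily passes through \emph{all nine} cubic vertices (the path has ten vertices, one of which is the pendant). In $G^{(k)}$ this path has length $9(k+1)$, and since every neighbor of $c$ lies on the path one may extend by $k$ further edges into the subdivision of a chord at $c$, giving length $10k+9$. By contrast, a longest path of $G_0$ with both ends in $R$ cannot be extended in $G^{(k)}$ and has length only $9k+9$, and any path through at most nine original vertices reaches at most $8(k+1)+2k=10k+8$. Hence every longest path of $G^{(k)}$ contains all nine cubic original vertices, each of which is therefore Gallai: $\lpt(G^{(k)})=1$. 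Exactly the same phenomenon defeats the triangle replacement applied directly to $G_0$: the longest paths of the resulting graph again project to ten-vertex paths of $G_0$ with one pendant endpoint, hence visit all nine triangles in full, and every triangle vertex is Gallai.

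The paper circumvents this with a \emph{non-uniform} subdivision: the three pendant edges are replaced by paths of length $q$ with $q>|E(G_0)|\cdot p$, and all other edges by paths of length $p$. This forces every longest path of $G_1$ to use two of the three long pendant arms, so its ends lie in $R$ and it omits some cubic vertex; since each cubic vertex is omitted by some such path, $\lpt(G_1)\ge 2$. The triangle replacement is then applied to $G_1$ (not $G_0$), and the bijection between longest paths transfers $\lpt\ge 2$ to $G_2$. The weighting of the pendant edges is the missing idea in your argument.
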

\begin{proof}
Let $H$ be a fixer. By definition, if $G$ is a graph with $\lpt(G)>1$, then $H$ is an induced subgraph of $G$. 

Note that $G_0$ is obtained from the Petersen graph by splitting an arbitrary vertex into a set $R$ of three vertices, each of degree 1 (see \Cref{fig:counter}). Clearly, $G_0$ is triangle-free and every path in $G_0$ avoids at least one vertex in $R$.  Since the Petersen graph has no Hamiltonian cycle \citep{West}, every path in $G_0$ omits at least $2$ vertices. Moreover, since the Petersen graph is vertex-transitive \citep{West} and has a $9$-cycle, it follows that for each vertex $x\in V(G_0)\setminus R$, there is a longest path in $G_0$ with both ends in $R$ that omits only $x$ and the other vertex in $R$.

Let $M$ be the set of $3$ edges incident to the vertices in $R$.  Let $G_1$ be the graph obtained from $G_0$ by replacing each edge in $M$ with a path of length $q$ and replacing each edge outside $M$ with a path of length $p$, where $p>|V(H)|$.  Provided that $q>|E(G_0)|\cdot p$, the longest paths in $G_1$ are in bijective correspondence with the longest paths in $G_0$ that have both ends in $R$.  Recalling that, for each $x\in V(G_0)\setminus R$, there is a longest path in $G_0$ with both ends in $R$ that omits $x$, we have $\gal(G_1)>1$.  Since $G_1$ has girth larger than $|V(H)|$ and $H$ is an induced subgraph of $G_1$, it follows that $H$ is acyclic.  

Let $S$ be the set of cubic vertices in $G_1$. We obtain $G_2$ from $G_1$ by replacing each vertex $w\in S$ with a triangle $T_w$ such that the three edges incident to $w$ in $G_1$ are incident to distinct vertices of $T_w$ in $G_2$. Clearly, $G_2$ is claw-free. Let $P$ be a longest path in $G_2$. Again, provided that $q$ is sufficiently large, $P$ has its ends in $R$. When $P$ visits a vertex in some $T_w$, it must visit all vertices in $T_w$ before leaving. It follows that the longest paths in $G_2$ are in bijective correspondence with the longest paths in $G_1$ and $\gal(G_2)>1$.

Since $H$ is an induced subgraph of $G_1$ and $G_2$, it follows that $H$ is triangle-free and claw-free, and so $\Delta(H)\le 2$.  Recalling that $H$ is acyclic, we have that $H$ is a linear forest. But $H$ is also an induced subgraph of $G_0$ and to obtain an induced linear forest as a subgraph of $G_0$, a vertex must be deleted from the closed neighborhood of each cubic vertex of $G_0$.  Let $R'$ be the set of neighbors of vertices in $R$.  Since the vertices in $R'$ are cubic and have disjoint closed neighborhoods, each induced linear forest has at most $|V(G_0)| - |R'|$ vertices, and so  $|V(H)|\le |V(G_0)| - |R'| = 12 - 3 = 9$.  
\end{proof}

\begin{remark}
\citet[Problem~6]{GS19} asked whether all longest paths in a connected claw-free graph have a non-empty intersection. \Cref{prop:linforest} answers this question in the negative.
\end{remark}

For $|V(H)| \le 4$, we show that $H$ is a fixer if and only if $H$ is a linear forest.  Necessity follows from \Cref{prop:linforest}. For sufficiency, we show that every $4$-vertex linear forest is a fixer. The linear forests of order $4$ are $P_4$, $P_3 + P_1$, $2P_2$, $P_2 + 2P_1$, and $4P_1$ (see \Cref{fig:4vertex}). \citet{CL20} showed that $P_4$-sparse graphs, a superclass of $P_4$-free graphs, form a Gallai family, whereas \citet{GS18} showed that $2P_2$-free graphs form a Gallai family. In other words, $P_4$ and $2P_2$ are fixers. In the following, we address the remaining cases: $P_3 + P_1$, $P_2 + 2P_1$, and $4P_1$.

\begin{figure}[h!]
\centering
\includegraphics[scale=0.9]{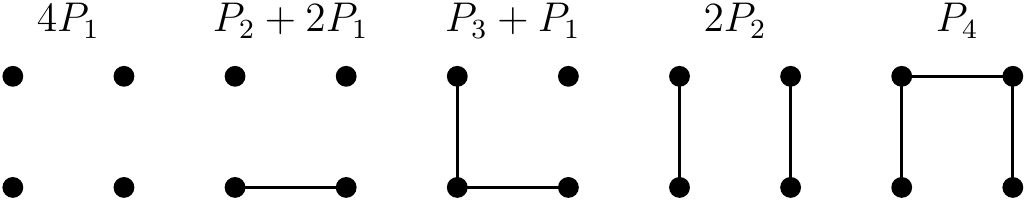}
\caption{The linear forests on $4$ vertices. These are exactly the graphs $H$ on $4$ vertices such that $\Free(H)$ is a Gallai family.}
\label{fig:4vertex}
\end{figure}

We begin with some basic but useful observations.  Given vertices $x,y\in V(G)$, an \emph{$xy$-fiber} is a longest path among all the $xy$-paths. Similarly, an \emph{$x$-fiber} is a longest path among all the paths having $x$ as an endpoint, and a \emph{fiber} is a longest path in $G$.  Note that every fiber is an $x$-fiber for some vertex $x$, and every $x$-fiber is an $xy$-fiber for some vertex $y$.

The following two basic lemmas are used repeatedly, sometimes implicitly. Similar ideas are key to the results in \citep{CE72}. The first basic lemma treats single neighbors of fibers.

\begin{lemma}\label{lem:x-fiber}
Let $P$ be an $xy$-path in a graph $G$, where $P=v_0\cdots v_\ell$ with $x=v_0$ and $y=v_\ell$.  Let $H$ be a component of $G-V(P)$ with a neighbor $v_i$ on $P$.  If $P$ is an $x$-fiber, then $i<\ell$.  Moreover, if $0<i$, then $v_\ell v_{i-1}\not\in E(G)$.  Similarly, if $P$ is a $y$-fiber, then $0<i$, and if $i<\ell$, then $v_0v_{i+1}\not\in E(G)$.
\end{lemma}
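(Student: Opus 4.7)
The plan is to argue by contradiction using the classical path-rerouting trick: in each case we assume the forbidden configuration holds and then construct an $x$-path (respectively $y$-path) in $G$ of length strictly greater than $\ell$, contradicting the assumption that $P$ is an $x$-fiber (respectively $y$-fiber). Since the statement is symmetric under reversing $P$, I would prove the two claims about $x$-fibers and obtain the $y$-fiber claims by relabeling $v_j \mapsto v_{\ell - j}$.

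For the first claim, suppose $P$ is an $x$-fiber and $i = \ell$. Pick any vertex $h \in V(H)$ with $hv_\ell \in E(G)$ (such an $h$ exists because $v_i = v_\ell$ is, by hypothesis, a neighbor of the component $H$). Then $v_0 v_1 \cdots v_\ell h$ is an $x$-path of length $\ell + 1$, contradicting the fact that $P$ is an $x$-fiber. Hence $i < \ell$.

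For the second claim, suppose $P$ is an $x$-fiber, $0 < i$, and $v_\ell v_{i-1} \in E(G)$. Choose $h \in V(H)$ with $hv_i \in E(G)$. Then the walk
\[
v_0 v_1 \cdots v_{i-1}\, v_\ell v_{\ell-1}\cdots v_i\, h
\]
traverses the edges $v_0 v_1, \dots, v_{i-2} v_{i-1}$ (that is, $i-1$ edges), then the edge $v_{i-1} v_\ell$, then the edges $v_\ell v_{\ell-1}, \dots, v_{i+1} v_i$ (that is, $\ell - i$ edges), and finally $v_i h$. All vertices traversed are distinct (the $v_j$'s are distinct as vertices of $P$, and $h$ lies outside $V(P)$), so this is a path; it starts at $x = v_0$, and has length $(i-1) + 1 + (\ell - i) + 1 = \ell + 1 > \ell$, contradicting the fact that $P$ is an $x$-fiber.

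There is no real obstacle here beyond choosing the reroute correctly and verifying the edge count; the only thing to be careful about is that $h$ is guaranteed to exist and is not on $P$, which follows from $H$ being a component of $G - V(P)$ with $v_i$ as a neighbor. The $y$-fiber statements follow by applying the proven claims to the reverse path $v_\ell v_{\ell - 1} \cdots v_0$ and translating the indices accordingly.
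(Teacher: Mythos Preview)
Your proof is correct and follows essentially the same approach as the paper: extend $P$ by a vertex of $H$ to rule out $i=\ell$, and for the second claim reroute $v_0\cdots v_{i-1}v_\ell\cdots v_i h$ to obtain a longer path starting at $x$, then invoke symmetry for the $y$-fiber case. The only difference is that you spell out the edge count explicitly, which the paper leaves implicit.
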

\begin{proof}
Suppose $P$ is an $x$-fiber.  No vertex in $H$ is adjacent to $y$, or else $P$ extends to a longer $x$-fiber, a contradiction.  Therefore, $i<\ell$.  Also, if $i>0$ and $v_{i-1}v_\ell \in E(G)$, then following $P$ from $v_0$ to $v_{i-1}$, traversing $v_{i-1}v_\ell$, following $P$ backward from $v_\ell$ to $v_i$, and traveling to $H$ produces a longer $x$-fiber.  The case that $P$ is a $y$-fiber is symmetric.
\end{proof}

In many of our arguments, we show that a path $P$ in $G$ has some desired property or else we obtain a longer path.  We now formalize two common ways to obtain longer paths.  Given two lists of objects $a$ and $b$, a \emph{splice} of $a$ with $b$ is a sequence obtained from $a$ by (1) replacing a non-empty interval of $a$ with $b$, or (2) inserting $b$ between consecutive elements in $a$, or (3) prepending or appending $b$ to $a$.  Given a \emph{host path} $P$ and a \emph{patching path} $Q$, a \emph{splice} of $P$ with $Q$ is a path whose vertices are ordered according to a splice of the ordered list of vertices in $P$ with the ordered list of vertices in $Q$.  A splice of $P$ that has the same endpoints as $P$ is an \emph{interior splice}; otherwise, the splice is \emph{exterior}.

A \emph{detour} of an $xy$-path $P$ is a path obtained from $P$ by using two patching paths $Q_1$ and $Q_2$ as follows.  Suppose that $Q_i$ is a $u_iw_i$-path for $i\in\{1,2\}$ and $u_1,u_2,w_1,w_2$ are distinct vertices appearing in order along $P$.  We follow $P$ from $x$ to $u_1$, traverse $Q_1$, follow $P$ backward from $w_1$ to $u_2$, traverse $Q_2$, and finally follow $P$ from $w_2$ to $y$.

Note that our definitions of a splice and detour require the resulting object to be a path and therefore implicitly impose certain disjointness conditions on segments of the host and the patching paths. Also, note that interior splices and detours of $P$ have the same endpoints as $P$. A splice or detour of $P$ is \emph{augmenting} if it is longer than $P$.

Let $P$ be a path in $G$ and let $H$ be a component of $G-V(P)$. A vertex $s\in V(P)$ with a neighbor in $H$ is an \emph{attachment point} of $H$.  Our next lemma concerns pairs of attachment points. 

\begin{lemma}\label{lem:attach-pair}
Let $P$ be an $xy$-path in a graph $G$ and let $H$ be a component of $G-V(P)$ with attachment points $s$ and $s'$, where $s$ appears before $s'$ when traversing $P$ from $x$ to $y$. The following hold:
\begin{enumerate}
	\item If $s$ and $s'$ are consecutive on $P$, then there is an augmenting interior splice of $P$.
	\item If $s$ and $s'$ are not consecutive along $P$, $w$ and $w'$ immediately follow $s$ and $s'$ respectively, and $ww'\in E(G)$, then there is an augmenting detour of $P$.
	\item If $s$ and $s'$ are not consecutive along $P$, $w$ and $w'$ immediately precede $s$ and $s'$ respectively, and $ww' \in E(G)$, then there is an augmenting detour of $P$.
\end{enumerate}
\end{lemma}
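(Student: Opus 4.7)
The plan is to produce, in each of the three parts, an explicit augmenting splice or detour obtained by routing through the component $H$. First pick neighbors $h, h' \in V(H)$ of $s$ and $s'$ respectively, and let $Q$ be an $hh'$-path in $H$ (which exists by connectivity of $H$). Write $R$ for the $ss'$-path $s, h, Q, h', s'$ (collapsed to $s, h, s'$ when $h = h'$). Then $R$ has length at least $2$, and its interior lies in $V(H)$, hence is disjoint from $V(P)$.

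For part (1), since $s$ and $s'$ are consecutive on $P$, I would splice $P$ by replacing the edge $ss'$ with $R$. The result is a path from $x$ to $y$ (so an interior splice), whose length exceeds that of $P$ by $|R| - 1 \ge 1$, and is therefore augmenting.

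For parts (2) and (3), I would invoke the detour construction with patching paths $R$ (through $H$) and the edge $ww'$. In part (2) the order along $P$ is $s, w, \ldots, s', w'$, so I set $u_1 = s$, $w_1 = s'$, $u_2 = w$, $w_2 = w'$; the four vertices $s, w, s', w'$ are distinct, because $w \ne s'$ (otherwise $s, s'$ would be consecutive on $P$), and appear in this order on $P$. Using $R$ as $Q_1$ and the edge $ww'$ as $Q_2$ produces a detour that replaces the two length-$1$ edges $sw$ and $s'w'$ of $P$ with the edge $ww'$ and the path $R$, for a net length gain of $|R| - 1 \ge 1$. Part (3) is completely symmetric: set $u_1 = w$, $w_1 = w'$, $u_2 = s$, $w_2 = s'$, use $ww'$ as $Q_1$ and $R$ as $Q_2$, and repeat the same length count; distinctness of $w, s, w', s'$ follows from $w' \ne s$.

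The only point requiring care is the disjointness needed for the detour to be a path: the three retained $P$-segments are vertex-disjoint because they are separated by the distinct ordered vertices $s, w, s', w'$ (resp.\ $w, s, w', s'$), and the interior of $R$ lies in $V(H)$, off $P$. Beyond that, the argument reduces to a one-line length comparison in each case, so I do not anticipate any substantive obstacle.
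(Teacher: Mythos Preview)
Your proof is correct and follows essentially the same approach as the paper's: in each part you build the $ss'$-path $R$ through $H$ and use it as the patching path (alone in part 1, together with the edge $ww'$ in parts 2 and 3), exactly as the paper does with its $Q_1$ and $Q_2$. Your additional verification of distinctness, disjointness, and the length gain simply makes explicit what the paper leaves to the reader.
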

\begin{proof}
For part 1, since $s$ and $s'$ are consecutive attachment points on $P$, we obtain an augmenting interior splice by inserting an appropriate path in $H$ between $s$ and $s'$.  For part 2, let $Q_1$ be an $ss'$-path with interior vertices in $H$ and let $Q_2$ be the path $ww'$.  There is an augmenting detour of $P$ using patching paths $Q_1$ and $Q_2$. The case in part 3 is symmetric.
\end{proof}

When $P$ is a kind of fiber and a component $H$ of $G-V(P)$ has many attachment points, our next lemma obtains a large independent set contained in $P$ consisting of non-attachment points.

\begin{lemma}\label{lem:indep-set}
Let $P$ be an $xy$-path in a graph $G$, let $H$ be a component of $G-V(P)$ and let $k$ be the number of attachment points of $H$. There is an independent set $A$ of $G$ such that $A \subseteq V(P)$, no edge joins a vertex in $A$ and a vertex in $V(H)$, and the following hold:
\begin{enumerate}
    \item If $P$ is an $xy$-fiber, then $A \subseteq V(P)-\set{x,y}$ and $|A|\ge k-1$.
    \item If $P$ is an $x$-fiber, then $A \subseteq V(P)-\set{x}$ and $|A|\ge k$.
    \item If $P$ is a fiber, then $A \subseteq V(P)$ and $|A|\ge k+1$.
\end{enumerate}
\end{lemma}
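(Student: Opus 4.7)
The plan is to take $A$ to consist of certain immediate successors of attachment points along $P$, together with an endpoint of $P$ in part~3. Write $P=v_0\cdots v_\ell$ with $x=v_0$ and $y=v_\ell$, let $s_1,\ldots,s_k$ be the attachment points of $H$ in the order they appear on $P$, and for each $s_i\ne y$ let $w_i$ be the successor of $s_i$ on $P$. Two structural facts drive the whole argument, both from \Cref{lem:attach-pair}. First, whenever $w_i$ is defined, $w_i$ is not itself an attachment point: otherwise $s_i$ and $w_i$ would be consecutive attachment points on $P$, and part~1 of \Cref{lem:attach-pair} would yield an augmenting interior splice, contradicting that $P$ is a fiber. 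Second, $w_iw_j\notin E(G)$ whenever both are defined and $i<j$, by part~2 of \Cref{lem:attach-pair} applied to the non-consecutive pair $s_i,s_j$. Hence $\{w_i:s_i\ne y\}$ is an independent set whose members have no neighbor in $V(H)$.

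For part~1, I would take $A=\{w_1,\ldots,w_{k-1}\}$. Each such $w_i$ lies strictly between $s_i$ and $s_{i+1}$ on $P$, so $w_i\notin\{x,y\}$; the $w_i$'s are distinct, yielding $|A|\ge k-1$.

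For part~2, \Cref{lem:x-fiber} rules out $y$ as an attachment point, so every $w_i$ is defined, and $w_k$ is not an attachment point since $s_k$ is the last one on $P$. Taking $A=\{w_1,\ldots,w_k\}$ gives $|A|\ge k$, and each $w_i\ne x$ because $w_i$ follows an attachment point on $P$. For part~3, \Cref{lem:x-fiber} applied to $P$ as both an $x$-fiber and a $y$-fiber shows that neither endpoint is an attachment point, so I would take $A=\{x\}\cup\{w_1,\ldots,w_k\}$. The only new condition to verify is $xw_i\notin E(G)$: but \Cref{lem:x-fiber} (using that $P$ is a $y$-fiber) states precisely that $v_0v_{i'+1}\notin E(G)$ for each attachment point $v_{i'}=s_i$, and $v_{i'+1}=w_i$. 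Hence $A$ is independent, has no edge to $V(H)$, and $|A|\ge k+1$.

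The main obstacle is purely bookkeeping: ensuring each $w_i$ is well-defined, distinct from the other chosen vertices, and lies in the prescribed subset of $V(P)$, plus handling the trivial case $k=0$ (take $A=\emptyset$ in parts~1 and~2 and $A=\{x\}$ in part~3). Nothing here is conceptually deep; the lemma is essentially a direct packaging of \Cref{lem:x-fiber} and \Cref{lem:attach-pair}.
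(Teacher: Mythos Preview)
Your proof is correct and follows essentially the same approach as the paper's: take the successors $w_i$ of the attachment points (dropping $w_k$ in part~1, and adding $x$ in part~3), and use \Cref{lem:attach-pair} and \Cref{lem:x-fiber} to verify independence and the absence of edges to $V(H)$. One minor wording slip: when you say the augmenting splice/detour ``contradicts that $P$ is a fiber,'' you should say it contradicts that $P$ is an $xy$-fiber (the weakest hypothesis in play); the logic is unaffected since interior splices and detours preserve endpoints.
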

\begin{proof}
Let $s_1,\ldots,s_k$ be the attachment points of $H$, with indices increasing from $x$ to $y$ along $P$, and let $S = \{s_1,\ldots,s_k\}$ (see \Cref{fig:lemma5}).  

For part 1, let $A$ be the set of vertices in $P$ that immediately follow some $s_i$ with $1\le i < k$. Since $P$ is an $xy$-fiber, \Cref{lem:attach-pair} implies that $s_i$ and $s_{i+1}$ are not consecutive along $P$. Therefore, $S$ and $A$ are disjoint and so no vertex in $A$ has a neighbor in $H$. By \Cref{lem:attach-pair}, it follows that $A$ is an independent set.  

For part 2, suppose in addition that $P$ is an $x$-fiber.  By \Cref{lem:x-fiber}, $s_k \ne y$, and we may take $A$ to be the set of vertices that immediately follow some $s_i$ with $1\le i\le k$.  

For part 3, suppose in addition that $P$ is a fiber.  By \Cref{lem:x-fiber}, we have $s_1 \ne x$.  Let $A$ be the set of vertices that immediately follow an attachment point together with $x$.  Note that since $P$ is also a $y$-fiber, it follows from \Cref{lem:x-fiber} that $x$ has no neighbor in $A$, and so $A$ is an independent set of size $k+1$.  
\end{proof}

\begin{figure}[h!]
\centering 
\includegraphics[scale=0.9]{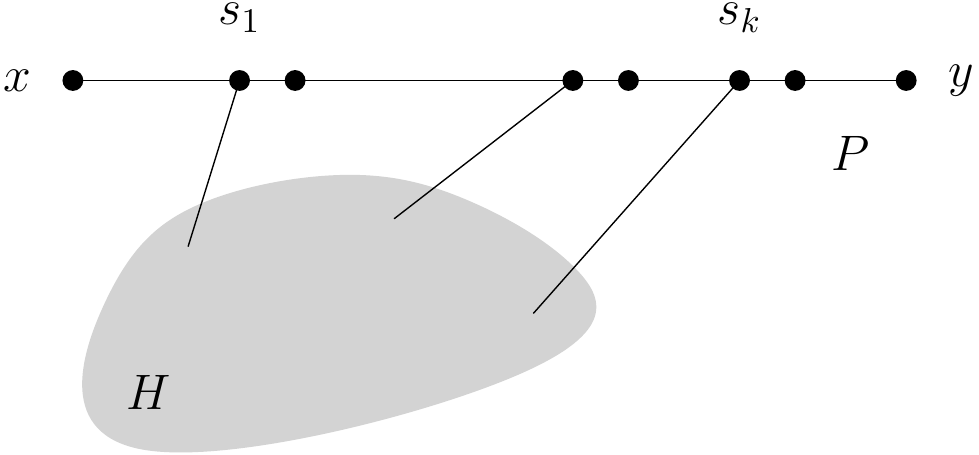}
\caption{Construction of $A$ in the proof of \Cref{lem:indep-set}.}
\label{fig:lemma5}
\end{figure}

We can finally show in the following sections that $P_3 + P_1$, $P_2 + 2P_1$, and $4P_1$ are all fixers. 

\subsection{$P_3 + P_1$ is a fixer}

\begin{theorem}\label{thm:P3+P1}
If $G$ is a connected $(P_3+P_1)$-free graph, then every vertex of degree at least $\Delta(G) - 1$ is a Gallai vertex.
\end{theorem}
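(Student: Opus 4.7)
I argue by contradiction, supposing some longest path $P = v_0 \cdots v_\ell$ of $G$ avoids $v$, where $d(v) \ge \Delta(G) - 1$. The structural lever from $(P_3+P_1)$-freeness that I use throughout is that for any vertex $w$, the set $V(G) - N[w]$ contains no induced $P_3$---any such $P_3$ together with $w$ would form a forbidden $P_3+P_1$---so $V(G) - N[w]$ is a disjoint union of cliques. Let $H$ be the component of $G - V(P)$ containing $v$. In the easy case $N(v) \cap V(P) = \emptyset$, we get $V(P) \subseteq V(G) - N[v]$, so $G[V(P)]$ is a disjoint union of cliques; the path edges of $P$ force $V(P)$ to be a single clique. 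Connectivity gives an attachment point $s$ of $H$ on $P$ with a neighbor $u \in V(H)$, and $u \ne v$ because $v$ has no neighbor on $P$, so $H$ contains a $v$-$u$ path of length at least $1$. Rearranging $V(P)$ as a Hamiltonian path on this clique starting at $s$, then prepending the edge $us$ and a $v$-$u$ path in $H$, produces a path of length at least $\ell + 2$, contradicting the optimality of $P$.

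In the remaining case $N(v) \cap V(P) \neq \emptyset$, set $N_P = N(v) \cap V(P)$, $N_H = N(v) \cap V(H)$, $r = |N_P|$, and $s = |N_H|$, so $d(v) = r+s$. Let $k$ be the number of attachment points of $H$ on $P$; then $k \ge r$, since every vertex of $N_P$ is an attachment point. By \Cref{lem:indep-set}(3) there is an independent set $A \subseteq V(P)$ with $|A| \ge k+1 \ge 2$ and no edges to $V(H)$, so $A \subseteq V(G) - N[v]$; by \Cref{lem:attach-pair}(1), no vertex of $A$ is itself an attachment point. The crux is three iterated applications of the forbidden $P_3+P_1$. \emph{First,} for any $y \in N(v)$ and non-adjacent $a, a' \in A$, if $y$ were adjacent to $a$ but not $a'$ then $vya$ would be an induced $P_3$ and $a'$ a valid $P_1$ (non-adjacent to each of $v, y, a$); so each $y \in N(v)$ is adjacent to all of $A$ or to none, and since every $y \in N_P$ is adjacent along $P$ to its successor in $A$, every $y \in N_P$ is adjacent to all of $A$. \emph{Second,} every $w \in V(G) - N[v]$ lies in only one clique of the cluster decomposition and so is non-adjacent to at least one $a \in A$; replaying the same forbidden-configuration argument with $w$ as the isolated $P_1$ forces $w$ to be adjacent to every $y \in N_P$. \emph{Third,} for any $u \in N_H$ and $y \in N_P$, if $uy$ were a non-edge then, for any non-adjacent $a, a' \in A$, the path $aya'$ would be an induced $P_3$ inside $V(G) - N[u]$ (since $u$ has no neighbor in $A$), contradicting the $P_3$-freeness of $V(G) - N[u]$. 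Hence every vertex of $N_H$ is adjacent to every vertex of $N_P$.

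Combining these observations, for any $y \in N_P$,
\[
d(y) \;\ge\; 1 + |V(G) - N[v]| + |N_H| \;=\; n - r,
\]
so $d(y) \le \Delta \le d(v)+1 = r+s+1$ forces $n \le 2r+s+1$. On the other hand, $|V(G)-N[v]| \ge |A| \ge k+1 \ge r+1$ gives $n \ge 2r+s+2$, the desired contradiction. The step I expect to be most delicate is the iterated propagation of the forbidden $P_3+P_1$ outward from $A$: at each stage one must verify that the candidate isolated vertex truly has no edge to the candidate $P_3$, relying on the ``no edges from $A$ to $V(H)$'' property of $A$ given by \Cref{lem:indep-set}(3) together with the attachment-point-free structure of $A$.
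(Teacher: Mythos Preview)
Your argument is correct, and it takes a genuinely different route from the paper's proof.

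The paper argues locally along $P$: it first shows $H$ is complete, then uses four direct $(P_3+P_1)$-configurations to force $xv_i\in E(G)$ for every attachment point $v_i$, then $v_{i-1}v_{i+1}\notin E(G)$, then that every vertex of $H$ is adjacent to every attachment point (so $d(u)=(t-1)+k$ with $t=|V(H)|$), and finally $v_iv_{j+1}\in E(G)$ for distinct attachment points $v_i,v_j$.  Counting these neighbours gives $d(v_i)\ge 1+t+k\ge d(u)+2$, contradicting $d(u)\ge\Delta(G)-1$.

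Your argument is global: you exploit the equivalent reformulation that $G-N[w]$ is a cluster graph for every $w$, pull the independent set $A$ from \Cref{lem:indep-set}, and then run three propagation steps to show that any $y\in N_P$ dominates $\{v\}\cup(V(G)-N[v])\cup N_H$, which are pairwise disjoint and avoid $y$.  This gives $d(y)\ge n-r$, while $|V(G)-N[v]|\ge|A|\ge r+1$ forces $n\ge 2r+s+2$; the degree cap $d(y)\le r+s+1$ then yields the contradiction $n\le 2r+s+1$.  This is a clean counting finish that never needs to show $H$ is complete or to track specific path-successor edges like $v_iv_{j+1}$.

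One small point worth tightening: your First claim uses that the $P$-successor of each $y\in N_P$ lies in $A$.  That is true, but it comes from the explicit construction of $A$ in the \emph{proof} of \Cref{lem:indep-set} (successors of attachment points together with $x$), not from the lemma's statement alone.  Either cite the construction or, more simply, note that the $P$-successor $y^+$ of an attachment point $y$ is never an attachment point (else \Cref{lem:attach-pair}(1) would lengthen $P$), so $y^+\in V(G)-N[v]$; the cluster structure then already gives an $a\in A$ non-adjacent to $y^+$, and your same forbidden-configuration step (with $P_3=vyy^+$ and $P_1=a$) forces $ya\in E(G)$.
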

\begin{proof}
Let $P$ be a longest path in $G$, where $P=v_0\cdots v_{\ell}$ with $x=v_0$ and $y=v_\ell$. Suppose for a contradiction that there is a vertex $u$ with $d(u)\ge\Delta(G) - 1$ but $u\not\in V(P)$.  Let $H$ be the component of $G-V(P)$ containing $u$.  Let $T = V(H)$, let $S$ be the set of attachment points of $H$ on $P$, let $k=|S|$, and let $t = |T|$.

Note that $H$ is a complete graph, or else an induced copy of $P_3$ in $H$ together with an endpoint of $P$ would induce a copy of $P_3+P_1$ in $G$. We now claim that $xv_i\in E(G)$ for each $v_i\in S$. Otherwise, by \Cref{lem:x-fiber}, given a neighbor $z$ of $v_i$ in $H$, $\{z, v_i, v_{i+1}, x\}$ would induce a copy of $P_3 + P_1$. 

Next we claim that $v_{i-1}v_{i+1}\not\in E(G)$ when $v_i\in S$.  Otherwise, we obtain a longer path by starting with a neighbor $z$ of $v_i$ in $H$, walking along $z v_i x$, following $P$ from $x$ to $v_{i-1}$, traversing $v_{i-1} v_{i+1}$, and following $P$ from $v_{i+1}$ to $y$. Therefore $zv_i\in E(G)$ for each $z\in T$ and $v_i\in S$, otherwise $\{z,v_{i-1},v_i,v_{i+1}\}$ would induce a copy of $P_3+P_1$.  It follows that $N(z) = (T \setminus \{z\})\cup S$ for each $z\in T$. In particular, $d(u)=(t-1)+k$.

Next we claim that, if $v_i,v_j\in S$ with $i\ne j$, then $v_iv_{j+1}\in E(G)$.  Otherwise, given a neighbor $z$ of $v_i$ in $H$, the set $\{z, v_i, v_{i+1}, v_{j+1}\}$ would induce a copy of $P_3 + P_1$ since $v_{i+1}v_{j+1}\not\in E(G)$ by \Cref{lem:attach-pair}. This implies that, if $v_i\in S$, then the neighborhood of $v_i$ contains $x$, $T$, and $\{v_{j+1}\st v_j\in S\}$, and so $d(v_i)\ge 1 + t + k$.  Therefore $\Delta(G)\ge d(v_i)\ge d(u)+2$, a contradiction. 
\end{proof}

The degree assumption in \Cref{thm:P3+P1} is best possible. Indeed, the complete bipartite graph $K_{t,t+2}$ is $(P_3+P_1)$-free, has maximum degree $t+2$, and the vertices of degree $t$ are not Gallai. 

\subsection{$P_2 + 2P_1$ is a fixer}

\begin{proposition}
If $G$ is a connected $(P_2 + 2P_1)$-free graph, then every vertex of maximum degree is a Gallai vertex.
\end{proposition}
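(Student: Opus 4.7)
The plan is to argue by contradiction, exploiting the fact that $(P_2+2P_1)$-freeness is equivalent to the following useful condition: for every edge $ab$ in $G$, the set $V(G)\setminus N[\{a,b\}]$ induces a clique, since any two non-adjacent vertices outside $N[\{a,b\}]$ would complete an induced $P_2+2P_1$ together with the edge $ab$. Let $P=v_0\cdots v_\ell$ be a longest path in $G$ and suppose some maximum-degree vertex $u$ avoids $V(P)$. Let $H$ be the component of $G-V(P)$ containing $u$, let $S$ be its set of attachment points on $P$, and set $k=|S|$; connectedness of $G$ gives $k\ge 1$. Apply \Cref{lem:indep-set}(3) to obtain an independent set $A\subseteq V(P)$ with $|A|\ge k+1\ge 2$ such that no vertex of $A$ has a neighbor in $V(H)$.

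The strategy is to choose an edge incident to $H$ so that $A$ is forced (essentially) into $V(G)\setminus N[\{a,b\}]$, making it behave like a clique, which contradicts independence. I would split into two cases according to $|V(H)|$. In the easy case $|V(H)|\ge 2$, pick any edge $ab$ internal to $H$; since $A$ is already disjoint from $V(H)\cup N(V(H))$, we have $A\subseteq V(G)\setminus N[\{a,b\}]$, so $A$ induces a clique. Because $A$ is independent with $|A|\ge 2$, this is an immediate contradiction.

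The harder case is $V(H)=\{u\}$, since no edge internal to $H$ is available. Here $d(u)=k=\Delta(G)$, and I would use the edge $uv_s$ for an arbitrary attachment point $v_s\in S$. Since $A$ avoids $\{u\}\cup N(u)$, the set $A\setminus N(v_s)$ is contained in $V(G)\setminus N[\{u,v_s\}]$ and hence is a clique; being a subset of the independent set $A$, it has size at most $1$. Thus $v_s$ has at least $|A|-1\ge k$ neighbors in $A\subseteq V(P)$, and is also adjacent to $u\in V(H)$, giving $d(v_s)\ge k+1>d(u)=\Delta(G)$, a contradiction.

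The main obstacle is precisely this second case: since we cannot play two vertices of $H$ against each other, the contradiction has to be extracted by a degree count at the attachment point $v_s$, and this relies on the full $|A|\ge k+1$ bound from \Cref{lem:indep-set}(3) rather than the weaker bounds available for $x$-fibers or $xy$-fibers. Once these two cases are handled, every maximum-degree vertex must lie in every longest path, which is exactly the conclusion that $u$ is a Gallai vertex.
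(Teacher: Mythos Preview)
Your argument is correct and lands on exactly the same degree count as the paper: an attachment point $s$ must be adjacent to all but at most one vertex of the independent set $A$ of size $k+1$ produced by \Cref{lem:indep-set}(3), and together with the edge $su$ this forces $d(s)\ge k+1>\Delta(G)$.  The only structural difference is in how the reduction to $|V(H)|=1$ is reached.  The paper first observes $xy\notin E(G)$ (else a longer path) and then uses the pair $x,y$ to show that $V(G)-V(P)$ is independent, so $H=\{u\}$ automatically; you instead split into cases and dispose of $|V(H)|\ge 2$ directly by picking an edge inside $H$ and noting that $A$ lies entirely outside its closed neighborhood, which is a clean shortcut that avoids the $xy$ discussion.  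Your reformulation of $(P_2+2P_1)$-freeness as ``the complement of every closed edge-neighborhood is a clique'' is a nice uniform lens, but unwinding it in the main case recovers precisely the paper's observation that $\{u,s,w_1,w_2\}$ would induce $P_2+2P_1$.
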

\begin{proof}
Let $G$ be a connected $(P_2 + 2P_1)$-free graph and let $P = v_0\cdots v_\ell$ be a longest path in $G$ with ends $x=v_0$ and $y=v_\ell$.  Suppose for a contradiction that $u$ is a vertex of maximum degree and $u\not\in V(P)$.  Let $k=d(u)=\Delta(G)$, and let $H$ be the component of $G-V(P)$ containing $u$.  Note that $xy\not\in E(G)$, or else we obtain a longer path by starting at a vertex in $H$ with a neighbor on $P$ and traveling around the cycle $P+xy$.  Also, $V(G)-V(P)$ is an independent set, or else, by \Cref{lem:x-fiber}, an adjacent pair of vertices in $V(G)-V(P)$ together with $x$ and $y$ would induce a copy of $P_2+2P_1$.

Let $S$ be the set of attachment points of $H$.  Since $H$ has one vertex, we have $|S|=k$. Applying \Cref{lem:indep-set} where $H$ is the graph with the single vertex $u$, there is an independent set $A\subseteq V(P)$ such that $|A|=k+1$ and $A\cap S=\emptyset$.

If some vertex $s\in S$ has two non-neighbors $w_1,w_2\in A$, then $\{u,s,w_1,w_2\}$ induces a copy of $P_2 + 2P_1$.  Hence every vertex in $S$ has at least $k$ neighbors in $A$.  Counting $u$, every vertex in $S$ has degree at least $k+1$, contradicting that $\Delta(G)=k$.
\end{proof}

Vertices of degree $\Delta(G)-1$ in a $(P_2+2P_1)$-free graph $G$ need not be Gallai. Indeed, consider the graph $G$ obtained from $K_{t,t+2}$ by removing a matching saturating the part of size $t$. $G$ is $(P_2 + 2P_1)$-free and $\Delta(G)=t+1$.  The longest paths in $G$ omit one vertex, and the Gallai vertices are those in the smaller part.  Two of the non-Gallai vertices in the larger part have degree $t$, which equals $\Delta(G)-1$.

\subsection{$4P_1$ is a fixer}\label{sec:indnumthree}

For a path $P$ in a graph $G$ containing the vertices $x$ and $y$, the \emph{closed subpath of $P$ with boundary points $x$ and $y$}, denoted $P[x,y]$, is the subpath of $P$ with endpoints $x$ and $y$.  The \emph{open subpath of $P$ with boundary points $x$ and $y$}, denoted $P(x,y)$, is $P[x,y] - \{x,y\}$.  Additionally, we define the \emph{semi-open} subpaths $P[x,y)$ and $P(x,y]$ analogously. 

Let $x,y\in V(G)$, let $P$ be an $xy$-path in $G$, and let $H$ be a component of $G-V(P)$.  For each non-attachment point $w\in V(P)$, we define the \emph{rank} of $w$, denoted $\rank(w)$, to be the maximum length of a subpath of $P[x,w]$ containing $w$ but no attachment points.  Note that if $s_1,\ldots,s_k$ are the attachment points with indices increasing from $x$ to $y$, then the rank of a non-attachment point $w\in V(P(s_i,s_{i+1}))$ is $\dist_P(s_i,w)-1$.

\begin{lemma}\label{lem:minihammer}
Let $P$ be an $xy$-path in a graph $G$ and let $H$ be a complete component of $G-V(P)$.  Let $S$ be the set of attachment points of $H$ on $P$, where $S=\{s_1,\ldots,s_k\}$, with indices increasing from $x$ to $y$, and suppose that the induced $(S,V(H))$-bigraph has a matching saturating $S_0$ when $S_0\subseteq S$ and $|S_0|\le |V(H)|$.  The following hold.
\begin{enumerate}
    \item If $s_1=x$, then $P$ has an augmenting splice with endpoint $y$.  If $s_k=y$, then $P$ has an augmenting splice with endpoint $x$.  If $s_i$ and $s_{i+1}$ are consecutive on $P$, then $P$ has an augmenting interior splice.  
    
    \item If some component $P_0$ of $P-S$ has fewer than $|V(H)|$ vertices, then $P$ has an augmenting splice replacing $P_0$.  
    
    \item If $w$ and $w'$ are in distinct components of $P - S - V(P[x,s_1])$,  $\rank(w)+\rank(w')<|V(H)|$, and $ww'\in E(G)$, then $P$ has an augmenting detour.
    
    \item If $w$ and $w'$ are in distinct components of $P-S$, $\rank(w)+\rank(w')<|V(H)|$, and $ww'\in E(G)$, then $G$ has a path with endpoint $y$ that is longer than $P$.
\end{enumerate}
\end{lemma}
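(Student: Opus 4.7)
The plan is to prove each of the four parts by an explicit construction, relying on two tools: the completeness of $H$ (so any two prescribed vertices of $V(H)$ are joined by a Hamiltonian path of $H$) and the matching condition of the hypothesis (which supplies distinct interface vertices in $V(H)$ adjacent to any prescribed set of at most $|V(H)|$ attachment points). \Cref{lem:attach-pair} provides the detour template used in parts~3 and~4.

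Part~1 is three boundary cases. When $s_1=x$, prepending a Hamiltonian path of $H$ ending at a neighbor of $x$ yields an augmenting splice with endpoint $y$; the case $s_k=y$ is symmetric. When $s_i,s_{i+1}$ are consecutive on $P$, the matching condition with $S_0=\{s_i,s_{i+1}\}$ (or a direct argument when $|V(H)|=1$) supplies distinct $h,h'\in V(H)$ adjacent to $s_i,s_{i+1}$, and inserting a Hamiltonian $hh'$-path of $H$ between $s_i$ and $s_{i+1}$ is an augmenting interior splice. Part~2 is the analogous replacement argument: for an internal component $P_0=P(s_i,s_{i+1})$ with $|V(P_0)|<|V(H)|$, a Hamiltonian $hh'$-path of $H$ bounded by distinct neighbors of $s_i,s_{i+1}$ replaces $P_0$; the boundary components $P[x,s_1)$ and $P(s_k,y]$ are handled exactly as in part~1.

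For part~3, I construct an augmenting detour. Let $s_a$ and $s_b$ be the attachment points immediately preceding $w$ and $w'$, respectively; since $w,w'$ lie in distinct components of $P-S-V(P[x,s_1])$ with $w$ preceding $w'$ on $P$, we have $1\le a<b$. Set $u_1=s_a,\,w_1=s_b,\,u_2=w,\,w_2=w'$, let $Q_1$ be a Hamiltonian $hh'$-path through $H$ where $h,h'\in V(H)$ are distinct neighbors of $s_a,s_b$ supplied by the matching condition on $S_0=\{s_a,s_b\}$, and let $Q_2$ be the edge $ww'$. The resulting detour loses exactly the interior vertices of $P(s_a,w)$ and $P(s_b,w')$---totalling $\rank(w)+\rank(w')$---and gains the $|V(H)|$ vertices of $H$, so the hypothesis makes it augmenting.

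For part~4, if both $w,w'$ lie outside $V(P[x,s_1])$, then part~3 produces an augmenting detour whose endpoints include $y$. Otherwise, after relabeling to assume $w$ precedes $w'$ on $P$, we have $w\in P[x,s_1)$; no attachment point precedes $w$, so the detour template fails, and instead I sacrifice the $x$-endpoint. Let $s_b$ be the attachment point immediately preceding $w'$, and pick a Hamiltonian path $h_1\cdots h_m$ of $H$ with $h_m$ a neighbor of $s_b$; the proposed path is
\[ h_1,\, h_2,\, \ldots,\, h_m,\, s_b, \text{ (backward along $P$ to $w$)},\, w,\, w', \text{ (forward along $P$ to $y$)},\, y, \]
which ends at $y$ and, by a direct count, has exactly $|V(H)|-\rank(w)-\rank(w')\ge 1$ more vertices than $P$. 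The main obstacle throughout is bookkeeping: verifying simplicity of each walk---in the last case, the backward $P$-segment from $s_b$ to $w$ and the forward $P$-segment from $w'$ to $y$ are disjoint precisely because $w<s_1\le s_b<w'$ on $P$---and checking that each required interface vertex in $H$ is supplied by the matching condition.
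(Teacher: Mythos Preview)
Your proof is correct and follows essentially the same approach as the paper's; the part~4 path you construct is in fact the reversal of the paper's. One small oversight in part~3: when $|V(H)|=1$, the matching hypothesis with $S_0=\{s_a,s_b\}$ does not apply (since $|S_0|>|V(H)|$), so you cannot obtain \emph{distinct} $h,h'\in V(H)$; the paper handles this boundary case separately by noting that $\rank(w)=\rank(w')=0$ and invoking \Cref{lem:attach-pair}(2), though your construction is easily repaired by taking $Q_1=s_a h s_b$ with $h$ the lone vertex of $H$.
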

\begin{proof}
For part 1, if $s_1=x$ or $s_k=y$, then we obtain an augmenting splice of $P$ by prepending or appending a Hamiltonian path of $H$.  If $s_i$ and $s_{i+1}$ are consecutive along $P$, then it follows from \Cref{lem:attach-pair} that $P$ has an augmenting interior splice.  

For part 2, let $P_0$ be a component of $P-S$ with $1\le |V(P_0)|<|V(H)|$.  Note that $P_0$ is $P[x,s_1)$, or $P(s_k,y]$, or $P(s_i,s_{i+1})$ for some $i$.  Suppose that $P_0 = P(s_i,s_{i+1})$.  Hence there is a matching $\{s_iz,s_{i+1}z'\}$ joining $s_i$ and $s_{i+1}$ to distinct vertices $z,z'\in V(H)$.  Since $H$ is complete, $H$ contains a spanning $zz'$-path $Q$. Since $|V(P_0)|<|V(H)|$, we obtain an augmenting interior splice by replacing $P_0$ with $Q$. The cases $P_0=P[x,s_1)$ and $P_0=P(s_k,y]$ are similar, except that we obtain an augmenting external splice.

For part 3, we may assume that $w$ appears before $w'$ when traversing $P$ from $x$ to $y$ (see \Cref{fig:lemma4}).  Let $i$ and $j$ be indices such that $w\in V(P(s_i,s_{i+1}))$ and $w'\in V(P(s_j,s_{j+1}))$ except that we set $j=k$ if $w'\in V(P(s_k,y])$.  Since $w$ and $w'$ are in distinct components of $P-S$, we have $i<j$.  If $|V(H)|=1$, then $\rank(w)+\rank(w')<|V(H)|$ implies that $w$ immediately follows $s_i$ and $w'$ immediately follows $s_j$.  By \Cref{lem:attach-pair} part (2), we have that $P$ has an augmenting detour.  Otherwise, $|V(H)|\ge 2$ and there is a matching $\{s_iz,s_jz'\}$ joining $s_i$ and $s_j$ to distinct vertices $z,z'\in V(H)$.  Let $Q_1$ be an $s_is_j$-path whose interior vertices form a spanning $zz'$-path in $H$, and let $Q_2$ be the path $ww'$.  The detour of $P$ with patching paths $Q_1$ and $Q_2$ adds the vertices in $V(H)$ but omits the $\rank(w)$ vertices in $P(s_i,w)$ and the $\rank(w')$ vertices in $P(s_j,w')$. Since $\rank(w)+\rank(w')<|V(H)|$, the detour is augmenting.

\begin{figure}[h!]
\centering 
\includegraphics[scale=0.9]{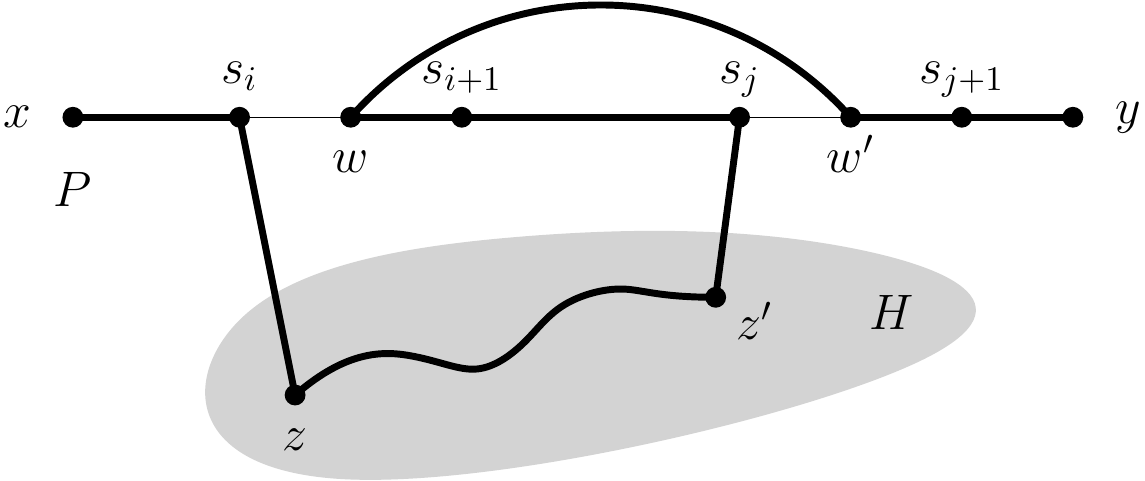}
\caption{Part 3 in \Cref{lem:minihammer}.}
\label{fig:lemma4}
\end{figure}

For part 4, we may apply the argument for part 3 unless $w\in V(P[x,s_1])$. As before, let $j$ be the index such that $w'\in V(P(s_j,s_{j+1}))$, except that we set $j=k$ if $w'\in V(P(s_k,y])$.  We obtain a new path $P'$ by following $P$ backward from $y$ to $w'$, traversing $w'w$, following $P$ forward from $w$ to $s_j$, traversing an edge joining $s_j$ and a vertex in $H$, and finishing with a Hamiltonian path in $H$.  The path $P'$ includes all of $V(H)$ but omits the $\rank(w)$ vertices in $P[x,w)$ and the $\rank(w')$ vertices in $P(s_j,w')$. Since $\rank(w)+\rank(w')<|V(H)|$, the path $P'$ is longer than $P$. 
\end{proof}

Our next lemma provides additional structure when $G$ is $k$-connected and $\alpha(G)\le k+2$.

\begin{lemma}\label{lem:hammer}
Let $P$ be a longest path in a graph $G$ with endpoints $x$ and $y$, and let $H$ be a component of $G-V(P)$.  Suppose that $G$ is $k$-connected and $\alpha(G)\le k+2$.  The following hold.
\begin{enumerate}
    \item The set $S$ of attachment points of $H$ on $P$ has size $k$.
    
    \item The subgraph $H$ is complete.
    
    \item The graph $P-S$ has $k+1$ components, and each has at least $|V(H)|$ vertices.
    
    \item If $w$ and $w'$ are in distinct components of $P-S$ and $\rank(w)+\rank(w')<|V(H)|$, then $ww'\not\in E(G)$.
    
    \item The vertices in each component of $P-S$ of rank less than $|V(H)|$ form a clique.

\end{enumerate}
\end{lemma}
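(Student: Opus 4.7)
The plan is to establish the five parts in order, since later parts use earlier ones.

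For part (1), removing $S$ from $G$ separates $V(H)$ from $V(P)\setminus S$, which forces $|S|\ge k$ by $k$-connectivity. For the matching upper bound, \Cref{lem:indep-set} part (3) applied to the fiber $P$ yields an independent set $A\ss V(P)$ of size at least $|S|+1$ with no edges to $V(H)$; adjoining any vertex of $V(H)$ gives $|S|+2\le\alpha(G)\le k+2$, so $|S|\le k$. Part (2) reuses the same $A$: if $H$ were not complete, then $H$ being connected produces two non-adjacent vertices $u_1,u_2\in V(H)$, and $A\cup\{u_1,u_2\}$ is an independent set of size at least $k+3$, contradicting $\alpha(G)\le k+2$.

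For part (3), \Cref{lem:x-fiber} and \Cref{lem:attach-pair} part (1) imply that the $k$ vertices of $S$ are interior vertices of $P$ with no two consecutive, so $P-S$ has exactly $k+1$ non-empty components. To obtain the lower bound $|V(H)|$ on each component via \Cref{lem:minihammer} part (2), the matching hypothesis must be verified: for every $S_0\ss S$ with $|S_0|\le|V(H)|$, the induced $(S_0,V(H))$-bigraph has a matching saturating $S_0$. If Hall's condition fails on some $A\ss S_0$, let $T=N(A)\cap V(H)$ with $|T|<|A|$; then $T\cup(S\setminus A)$ has fewer than $k$ vertices, and removing it isolates the non-empty set $V(H)\setminus T$ (whose only neighbors in $S$ lie in $S\setminus A$) from the non-empty set $V(P)\setminus S$, contradicting $k$-connectivity. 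Part (4) then follows from \Cref{lem:minihammer} part (4) applied contrapositively, since any forbidden edge $ww'$ would yield a path strictly longer than $P$.

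For part (5), suppose $w,w'$ lie in the same component $C$ of $P-S$, both have rank less than $|V(H)|$, and $ww'\notin E(G)$. Let $A$ be the independent set of size $k+1$ from \Cref{lem:indep-set} part (3); by its construction $A$ contains exactly one representative of each component of $P-S$, namely the rank-$0$ vertex. Let $a_C$ denote this representative in $C$, and set $A'=(A\setminus\{a_C\})\cup\{w,w'\}$. Each element of $A\setminus\{a_C\}$ has rank $0$ in a component other than $C$, so part (4) with rank sums $\rank(w)+0<|V(H)|$ and $\rank(w')+0<|V(H)|$ rules out every edge between $\{w,w'\}$ and $A\setminus\{a_C\}$; combined with $ww'\notin E(G)$, this shows $A'$ is independent of size at least $k+2$. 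Since $w$ and $w'$ are non-attachment points, they have no neighbors in $V(H)$, so $A'\cup\{u\}$ is independent for any $u\in V(H)$ and has size at least $k+3$, contradicting $\alpha(G)\le k+2$.

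The main obstacle is the verification of Hall's condition in part (3): this is the only step where the $k$-connectivity hypothesis is substantively used, and it requires converting a bipartite matching deficiency into a vertex cut of $G$ of size less than $k$. Once this is in hand, \Cref{lem:minihammer} and \Cref{lem:indep-set} carry the rest of the argument, and the independent-set swap in part (5) leverages part (4) precisely in the way the rank bounds are designed to support.
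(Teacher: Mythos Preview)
Your proof is correct and follows essentially the same approach as the paper's: both use \Cref{lem:indep-set} to pin down $|S|=k$ and $\alpha(H)=1$, verify the matching hypothesis of \Cref{lem:minihammer} via Hall's Theorem and $k$-connectivity, and then run the same rank-$0$ swap argument for part~(5). The only cosmetic differences are that you spell out the $k+1$-component count via \Cref{lem:x-fiber} and \Cref{lem:attach-pair} explicitly (the paper leaves this implicit in its appeal to \Cref{lem:minihammer}), and you reuse the symbol $A$ for the Hall-deficient subset, which mildly clashes with the independent set $A$ from earlier in your argument.
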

\begin{proof}
Let $S=\{s_1,\ldots,s_r\}$, with indices increasing from $x$ to $y$.  Since $G$ is $k$-connected and $H$ is a component of $G-V(P)$, it follows that $r\ge k$, or else $S$ separates $V(H)$ from $x$ and $y$. Since $P$ is a fiber, it follows from \Cref{lem:indep-set} that $G$ contains an independent set $A$ with $|A|=r+1$ such that $A\subseteq V(P)$ and no edge joins $A$ and $V(H)$.  Since $1 + (k+1)\le \alpha(H)+(r+1)=\alpha(H) + |A| \le \alpha(G) \le k+2$, it follows that $\alpha(H)=1$ and $r=k$.  Hence, there are exactly $k$ attachment points and $H$ is complete.

Let $S_0\subseteq S$ with $|S_0|\le |V(H)|$ and let $B$ be the induced $(S_0,V(H))$-bigraph. If $B$ has no matching saturating $S_0$, then Hall's Theorem \citep{West} implies that there exists $S_1 \subseteq S_0$ such that $|N_B(S_1)|<|S_1|$.  Since $|N_B(S_1)| < |S_1| \le |S_0| \le |V(H)|$, it follows that $N_B(S_1)\cup (S-S_1)$ is a cutset of size less than $k$, contradicting that $G$ is $k$-connected. Therefore \Cref{lem:minihammer} applies, and since $P$ is a longest path, parts 3 and 4 follow.

It remains to establish part 5.  Suppose for a contradiction that $w$ and $w'$ are distinct vertices in the same component $W$ of $P-S$ such that $\rank(w),\rank(w') < |V(H)|$ and $ww'\not\in E(G)$.  Let $A$ be the set of non-attachment points in $P$ with rank 0, and obtain $A'$ from $A$ by deleting the vertex in $W\cap A$ and adding $w$ and $w'$.  Note that, with the possible exception of $\{w,w'\}$, each pair of vertices in $A'$ has rank sum less than $|V(H)|$ and intersects two components of $P-S$.  It follows from part 4 that $A'$ is an independent set.  Since $|A'|=k+2$ and $A$ consists of non-attachment points, we may add any vertex in $H$ to obtain an independent set of size $k+3$, a contradiction.
\end{proof}

\begin{theorem}\label{thm:zero-sided-chvatal}
Let $k\in \{1,2\}$.  If $G$ is $k$-connected and $\alpha(G)\le k+2$, then every longest path in $G$ contains every vertex of degree at least $\Delta(G)-(2-k)$.
\end{theorem}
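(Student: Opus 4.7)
The plan is to argue by contradiction. Suppose $P=v_0v_1\cdots v_\ell$ is a longest path with endpoints $x=v_0$ and $y=v_\ell$, and some vertex $u$ with $d(u)\ge \Delta(G)-(2-k)$ is not on $P$. I would first invoke \Cref{lem:hammer} on the component $H$ of $G-V(P)$ containing $u$, obtaining that $|S|=k$, $H$ is a clique of size $t:=|V(H)|$, each of the $k+1$ components of $P-S$ has at least $t$ vertices, and the rank-based non-adjacency and clique conditions (parts 4 and 5) hold. Since $H$ is complete and $N(u)\cap V(P)\subseteq S$, we get $d(u)\le (t-1)+k$, hence $\Delta(G)\le t+1$, so it will suffice to exhibit a vertex of degree at least $t+2$.

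My candidate will be an attachment point $s=v_m\in S$. Its $P$-neighbors $v_{m-1},v_{m+1}$ already lie in $N(s)$, so it will suffice to show that $s$ is adjacent to every vertex of $V(H)$; in that case $d(s)\ge 2+t=t+2$. To establish this adjacency claim, I would assume $sz\notin E(G)$ for some $z\in V(H)$ and construct an independent set of size $k+3$, contradicting $\alpha(G)\le k+2$. The candidate set starts from the size-$(k+1)$ independent set $A$ from \Cref{lem:indep-set}(3), removes $v_{m+1}$ (which is $P$-adjacent to $s$), and then adds $s$, $z$, and a final extra vertex $w$. In the extremal case where every component of $P-S$ has exactly $t$ vertices, I would take $w=y$: the non-adjacency of $y$ with each $v_{i_{j'}+1}$ follows from \Cref{lem:hammer} part 4 (the rank sum equals $t-1<t$), and the non-edge $xy$ holds because otherwise the cycle $P+xy$ combined with a Hamiltonian path of $H$ inserted at $s$ yields a path longer than $P$. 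When some component of $P-S$ has more than $t$ vertices, I would instead choose $w$ to be a carefully selected high-rank vertex of that larger component.

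The main obstacle will be verifying independence of the candidate set in the presence of chord edges such as $sx$, $sy$, or $sv_{i_{j'}+1}$ for $j'$ not the index of $s$. Each such chord is handled in one of two ways: either $s$ has so many chord neighbors on $V(P)$ that $d(s)\ge t+2$ already, or else a specific chord, when combined with a Hamiltonian path of $H$ routed through the attachment points (using the $S$-to-$V(H)$ matching from Hall's theorem as in the proof of \Cref{lem:hammer}), produces a path strictly longer than $P$, contradicting its optimality. Careful choice of $w$ and component-size case analysis, especially for $k=2$ where one of $W_1$ or $W_2$ may exceed $t$, will resolve the remaining subcases.
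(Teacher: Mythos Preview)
Your overall plan---show that an attachment point $s$ has degree at least $t+2$, contradicting $\Delta(G)\le t+1$---matches the paper.  But the route you propose to get there has a real gap.

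You want to force $V(H)\subseteq N(s)$ by assuming $sz\notin E(G)$ and exhibiting an independent set of size $k+3$.  Your candidate set contains $s$ together with the vertices of $A$ from \Cref{lem:indep-set}(3), minus $v_{m+1}$.  The difficulty you flag---possible edges $sx$, $sy$, or $sv_{i_{j'}+1}$---is genuine, and your proposed resolution does not work.  Take $k=1$ and $t\ge 3$: the single chord $s_1x$ gives $s_1$ only the neighbors $x,v_{m-1},v_{m+1}$ and one vertex of $H$, which is $4<t+2$; and routing a Hamiltonian path of $H$ through $s_1$ together with the edge $s_1x$ yields paths of length at most $t+\max(|W_0|,|W_1|)+1$, which does not exceed $|V(P)|$ since $|W_0|,|W_1|\ge t$.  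So neither branch of your dichotomy fires, yet $sx\in E(G)$ destroys the independence of your set.  The same issue arises for $k=2$ with chords of the form $s_jv_{i_{j'}+1}$.

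The paper avoids this by \emph{not} insisting that $s_1$ dominate $V(H)$.  Instead it lets $a_i$ be the lowest-ranked non-neighbor of $s_1$ in each component $W_i$ of $P-S$ (if such a non-neighbor exists among the rank-$<t$ vertices).  If any $W_i$ or $V(H)$ lacks such a non-neighbor, then $s_1$ already has $t$ neighbors there, and together with its two $P$-neighbors and an $H$-neighbor (at most one overlap) this gives $d(s_1)\ge t+2$.  Otherwise the set $\{s_1,z,a_0,\ldots,a_k\}$ is built from \emph{declared} non-neighbors of $s_1$, so the only independence check left is $a_ia_j\notin E(G)$; the paper handles this via a rank-sum bound: if $\rank(a_i)+\rank(a_j)\ge t$ then $s_1$ already has $t$ low-rank neighbors in $W_i\cup W_j$, plus its $P$-predecessor (which is separated from those low-rank neighbors by $a_i$), again forcing $d(s_1)\ge t+2$.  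The key idea you are missing is to let the non-neighbors of $s$ in each $W_i$ determine the independent set, rather than committing to $x$ and the rank-$0$ vertices in advance.
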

\begin{proof}
Let $P$ be a longest path in $G$ with endpoints $x$ and $y$, and suppose for a contradiction that there exists $u \notin V(P)$ with $d(u)\ge \Delta(G)-(2-k)$.  Let $H$ be the component of $G-V(P)$ containing $u$, and let $t=|V(H)|$. Let $s_1,\ldots,s_k$ be the attachment points of $H$ on $P$, indexed in order from $x$ to $y$, and let $S=\{s_1,\ldots,s_k\}$.  Note that $\Delta(G) \le d(u) + (2-k) \le ((t-1)+k) + (2-k) = t+1$.

For each component $W$ of $P-S$, let $f(W)$ be the set of vertices $w$ in $W$ with $\rank(w)<t$. We claim that $N(s_1)$ either contains $V(H)$ or $f(W)$, for some component $W$ of $P-S$.  If not, then let $A$ be the set of vertices consisting of the lowest-ranked non-neighbor of $s_1$ in each component of $P-S$.  Note that if $\{w,w'\}$ is a pair of vertices in $A$, then $\rank(w)+\rank(w')<t$, or else $s_1$ has a set $B$ of at least $t$ neighbors in the components of $P-S$ containing $w$ and $w'$.  Let $z$ be the vertex in $P[x,s_1]$ that preceeds $s_1$.  Note that $z\not\in B$, since some non-neighbor of $s_1$ separates $z$ and the initial segment of $P[x,s_1)$ consisting of vertices belonging to $B$.  Counting $B$ together with $z$, it follows that $d(s_1)\ge t+2$, contradicting that $\Delta(G)\le t+1$.  Hence $\rank(w)+\rank(w')<t$ and it follows from \Cref{lem:hammer} part (4) that $A$ is an independent set.  But $A$ together with $s_1$ and a non-neighbor of $s_1$ in $H$ forms an independent set of size $k+3$, contradicting that $\alpha(G)\le k+2$.  Therefore $N(s_1)$ either contains $V(H)$ or $f(W)$ for some component $W$ of $P-S$.

Note that $|V(H)|=t$ and $|f(W)|=t$ for each component $W$ of $P-S$.  Let $v$ and $v'$ be the immediate neighbors of $s_1$ along $P$, and let $v''$ be a neighbor of $s_1$ in $H$.  Noting that $V(H)$ and each $f(W)$ intersect $\{v,v',v''\}$ in at most one vertex, it follows that $d(s_1)\ge t+3 - 1$, contradicting that $\Delta(G)\le t+1$.
\end{proof}

We note two consequences.

\begin{corollary}
If $G$ is a connected graph with $\alpha(G)\le 3$ and $\Delta(G)-\delta(G)\le 1$, or if $G$ is a  $2$-connected regular graph with $\alpha(G)\le 4$, then $G$ has a Hamiltonian path.
\end{corollary}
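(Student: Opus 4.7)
The plan is to derive both statements as immediate applications of \Cref{thm:zero-sided-chvatal}. The key observation is that if every vertex of $G$ satisfies the degree threshold appearing in that theorem, then every vertex lies on every longest path, forcing any longest path to be spanning.

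For the first statement, I would take $k=1$. Since $G$ is connected, $G$ is $1$-connected, and the hypothesis $\alpha(G)\le 3 = k+2$ lets me apply \Cref{thm:zero-sided-chvatal} with $k=1$. The conclusion is that every longest path contains every vertex of degree at least $\Delta(G)-1$. The degree condition $\Delta(G)-\delta(G)\le 1$ ensures that every vertex of $G$ has degree at least $\Delta(G)-1$, so every longest path contains every vertex of $G$, which means any longest path is a Hamiltonian path.

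For the second statement, I would take $k=2$. Since $G$ is $2$-connected and $\alpha(G)\le 4 = k+2$, \Cref{thm:zero-sided-chvatal} gives that every longest path contains every vertex of degree at least $\Delta(G)-(2-k) = \Delta(G)$. Since $G$ is regular, every vertex has degree exactly $\Delta(G)$, so again every longest path contains every vertex and is therefore Hamiltonian.

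There is no real obstacle here, as both statements reduce to simple bookkeeping once \Cref{thm:zero-sided-chvatal} is in hand: the only thing to verify is that the degree hypothesis on $G$ matches the degree threshold produced by the theorem for the chosen value of $k$, and the two given hypotheses $\Delta(G)-\delta(G)\le 1$ and regularity were tailored precisely for this purpose.
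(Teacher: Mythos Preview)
Your proposal is correct and is precisely the intended argument: the paper lists this corollary immediately after \Cref{thm:zero-sided-chvatal} with no proof, and the derivation is exactly the bookkeeping you describe, applying the theorem with $k=1$ and $k=2$ so that the degree threshold $\Delta(G)-(2-k)$ is met by every vertex.
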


\begin{corollary}
The graph $4P_1$ is a fixer.
\end{corollary}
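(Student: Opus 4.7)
The plan is to derive this statement as an immediate consequence of \Cref{thm:zero-sided-chvatal}. Observe that a graph is $4P_1$-free exactly when $\alpha(G)\le 3$, so I need to show that every connected graph $G$ with $\alpha(G)\le 3$ satisfies $\gal(G)=1$. The single-vertex case is trivial, so assume $|V(G)|\ge 2$, in which case $\kappa(G)\ge 1$ by connectedness.

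The approach is a short case split on connectivity. I will set $k=\min\{\kappa(G),2\}\in\{1,2\}$; then $G$ is $k$-connected and $\alpha(G)\le 3\le k+2$, so the hypotheses of \Cref{thm:zero-sided-chvatal} are satisfied. Applying the theorem, every longest path in $G$ contains every vertex of degree at least $\Delta(G)-(2-k)$. Since $2-k\ge 0$, any vertex of maximum degree qualifies, and is therefore a Gallai vertex, giving $\gal(G)=1$. Consequently $\Free(4P_1)$ is a Gallai family, i.e.\ $4P_1$ is a fixer.

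There is essentially no obstacle here: all the real content is packed into \Cref{thm:zero-sided-chvatal} (and the structural work in \Cref{lem:hammer}). The only care required is the routine observation that the two instances $k=1$ (covering $\kappa(G)=1$, where it is enough that some vertex of degree at least $\Delta(G)-1$ lies on every longest path) and $k=2$ (covering $\kappa(G)\ge 2$, where every maximum degree vertex does) together exhaust all possibilities for a connected graph on at least two vertices.
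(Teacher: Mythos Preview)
Your proof is correct and is exactly the intended derivation from \Cref{thm:zero-sided-chvatal}; the paper simply states this corollary without proof, leaving the reader to supply precisely the argument you give. If anything, your case split on $k=\min\{\kappa(G),2\}$ is slightly more elaborate than necessary---applying \Cref{thm:zero-sided-chvatal} with $k=1$ already suffices for every connected $G$ on at least two vertices---but this is harmless.
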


\section{A $5$-vertex fixer}\label{sec:5P1}

In this section, we show that $5P_1$ is a fixer.  Although $5P_1$ is a fixer, there are connected $5P_1$-free graphs in which no vertex of maximum degree is Gallai (see \Cref{Ex:BestPossible}).  By contrast, for each fixer $F$ of order at most $4$, the vertices of maximum degree in a connected $F$-free graph are all Gallai: \citet{GS18} show this for $F=2P_2$, our results in \Cref{monogenic} show this for $F\in\{P_3+P_1, P_2+2P_1, 4P_1\}$, and we leave the case $F=P_4$ as an exercise.  

The statement that $5P_1$ is a fixer is equivalent to the statement that if $G$ is a connected graph with $\alpha(G)\le 4$, then $G$ has a Gallai vertex.   In the case that $G$ is $2$-connected, the result already follows from \Cref{thm:zero-sided-chvatal}.  When $G$ has cut-vertices, we exploit the block-cutpoint structure of $G$.  We need the following two variants of \Cref{thm:zero-sided-chvatal} in the case that $P$ is an $x$-fiber or an $xy$-fiber for distinguished vertices $x,y\in V(G)$.

\begin{lemma}\label{lem:one-sided-chvatal}
Let $G$ be a $2$-connected graph with a distinguished vertex $x$.  If $\alpha(G-x)\le 3$, then every $x$-fiber contains every vertex in $G$ of maximum degree.
\end{lemma}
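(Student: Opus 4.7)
The plan is to adapt the proof of \Cref{thm:zero-sided-chvatal} in the case $k=2$ to the $x$-fiber setting. The two substantive modifications are (i) to invoke \Cref{lem:indep-set} part 2, which supplies an independent set inside $V(P) \setminus \{x\}$ matching the hypothesis $\alpha(G-x) \le 3$, in place of part 3; and (ii) to center the degree-boosting argument at the attachment point closer to $y$ rather than the one closer to $x$, since augmenting splices of the tail segment $P(s_k, y]$ preserve $x$ as an endpoint, whereas splices of the head segment $P[x, s_1)$ do not.

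First, I would suppose for contradiction that $u$ is a maximum-degree vertex not on the $x$-fiber $P = v_0 \cdots v_\ell$ with $v_0 = x$ and $v_\ell = y$. Let $H$ be the component of $G - V(P)$ containing $u$, let $S$ be its attachment points, $k = |S|$, and $t = |V(H)|$. Two-connectivity forces $k \ge 2$, else $S$ separates $H$. By \Cref{lem:indep-set} part 2, there is an independent set of size at least $k$ in $V(P) \setminus \{x\}$ with no edges to $V(H)$; combining it with a maximum independent set of $H$ yields an independent set of size $k + \alpha(H)$ in $G - x$. The hypothesis then forces $k = 2$ and $\alpha(H) = 1$, so $H$ is complete, $d(u) \le (t-1) + 2 = t + 1$, and hence $\Delta(G) \le t + 1$. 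The degenerate case $t = 1$ implies $\Delta(G) = 2$ and $G$ is a cycle whose $x$-fibers are Hamiltonian, so I may assume $t \ge 2$.

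The core task is to produce a vertex of degree at least $t + 2$. Writing $S = \{s_1, s_2\}$ with $s_1$ preceding $s_2$ along $P$, \Cref{lem:x-fiber} yields $s_2 \ne y$; set $W_1 = P(s_1, s_2)$ and $W_2 = P(s_2, y]$, and write $s_2 = v_j$. I would first apply Hall's theorem (using $2$-connectivity and $t \ge 2$) to verify the matching hypothesis of \Cref{lem:minihammer}. An augmenting interior splice replacing $W_1$ and an augmenting external splice replacing $W_2$ both yield longer $x$-paths, so $|W_1|, |W_2| \ge t$. Likewise, \Cref{lem:minihammer} part 3 produces an augmenting $xy$-detour whenever $w_1 w_2 \in E(G)$ with $w_i \in W_i$ and $\rank(w_1) + \rank(w_2) < t$, so no such edges exist. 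Now I would center the degree argument at $s_2$ and claim that $N(s_2)$ contains one of $V(H)$, $f(W_1)$, or $f(W_2)$. If none holds, take a non-neighbor $h \in V(H)$ and lowest-ranked non-neighbors $w_i \in f(W_i)$. If $\rank(w_1) + \rank(w_2) < t$, then $\{s_2, h, w_1, w_2\}$ is an independent set of size $4$ in $G - x$, contradicting the hypothesis; if instead $\rank(w_1) + \rank(w_2) \ge t$, summing the low-rank neighbors of $s_2$ in $W_1 \cup W_2$ with a neighbor in $V(H)$ and the on-path neighbor $v_{j-1}$ (which falls outside the low-rank block since rank uniquely identifies vertices within $W_1$) forces $d(s_2) \ge t + 2$. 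Each of the three containment cases similarly yields $d(s_2) \ge t + 2$ by a disjoint-set count adding the on-path neighbors $v_{j \pm 1}$ and, when applicable, a neighbor in $V(H)$, contradicting $\Delta(G) \le t + 1$.

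The principal obstacle is that \Cref{lem:hammer} cannot be quoted directly, because an $x$-fiber need not be a globally longest path: splices of the external segment $P[x, s_1)$ yield longer $y$-paths, not longer $x$-paths, so the length of $P[x, s_1)$ is uncontrolled. The resolution is to center the whole analysis at $s_2$ and avoid $W_0 = P[x, s_1)$ entirely, reducing the target independent-set size from $5$ to $4$ — precisely what the strengthened hypothesis $\alpha(G-x) \le 3$ permits.
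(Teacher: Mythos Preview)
Your proof is correct and follows essentially the same approach as the paper's: both arguments use \Cref{lem:indep-set} part~2 to force $|S|=2$ and $H$ complete, invoke \Cref{lem:minihammer} to bound $|W_1|,|W_2|\ge t$ and exclude low-rank cross edges, and then center the degree-boosting at $s_2$ to produce either an independent set $\{s_2,z,w_1,w_2\}\subseteq V(G-x)$ of size four or a count $d(s_2)\ge t+2$. The only differences are organizational: you frame the endgame as ``$N(s_2)$ contains one of $V(H),f(W_1),f(W_2)$'' and then contradict each case (mirroring \Cref{thm:zero-sided-chvatal}), whereas the paper goes directly to the dichotomy ``$s_2$ has $\ge t$ neighbors in some set, or pick minimum-rank non-neighbors''; and you dispatch $t=1$ separately via $G$ being a cycle, whereas the paper absorbs it into the matching hypothesis of \Cref{lem:minihammer}.
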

\begin{proof}
Let $P$ be an $x$-fiber with other endpoint $y$, and suppose for a contradiction that $u$ is a vertex of maximum degree not on $P$.  Let $H$ be the component of $G-V(P)$ containing $u$, and let $r$ be the number of attachment points of $H$ on $P$. Note that $r\ge 2$, or else there is at most one attachment point separating $y$ and $H$, contradicting that $G$ is $2$-connected.  Moreover, by \Cref{lem:indep-set} part (2), we have that $r+\alpha(H)\le \alpha(G-x) \le 3$. Since $r\ge 2$ and $\alpha(H)\ge 1$, it follows that $r=2$ and $\alpha(H)=1$.  Therefore $H$ is a complete graph. Let $\{s_1,s_2\}$ be the set of attachment points of $H$ on $P$, with indices increasing from $x$ to $y$, and let $S=\{s_1,s_2\}$.

Since $G$ is $2$-connected, there is a matching in the induced $(S,V(H))$-bigraph saturating $S$ or $|V(H)|=1$.  Let $t=|V(H)|$ and note that $d(u)\le (t-1)+2=t+1$.  Since $P$ is an $x$-fiber, it follows from \Cref{lem:minihammer} that both $P(s_1,s_2)$ and $P(s_2,y]$ are non-empty (part (1)) and have at least $t$ vertices (part (2)).  If $s_2$ has at least $t$ neighbors in some set in $\{V(H),V(P(s_1,s_2)),V(P(s_2,y])\}$, then $d(s_2)\ge t + 2 > d(u)$, contradicting that $u$ has maximum degree.  Hence $s_2$ has fewer than $t$ neighbors in each of $V(H)$, $V(P(s_1,s_2))$, and $V(P(s_2,y])$.  Let $w_1$ and $w_2$ be the non-neighbors of $s_2$ of minimum rank in $P(s_1,s_2)$ and $P(s_2,y]$, respectively, and let $z$ be a non-neighbor of $s_2$ in $H$.  

We claim that $\{s_2,z,w_1,w_2\}$ is an independent set, contradicting $\alpha(G-x)\le 3$.  By construction, $s_2$ has no neighbor in $\{z,w_1,w_2\}$.  Since $w_1$ and $w_2$ are not attachment points, $z$ has no neighbor in $\{w_1,w_2\}$.  If $w_1w_2\in E(G)$, then \Cref{lem:minihammer} part (3) and the fact that $P$ is an $x$-fiber imply that $\rank(w_1)+\rank(w_2)\ge t$.  Hence $s_2$ is adjacent to all vertices in $P(s_1,w_1)$ and $P(s_2,w_2)$, and there are at least $t$ of them. Together with the vertex preceding $s_2$ in $P$ and a neighbor of $s_2$ in $H$, we have $d(s_2)\ge t+2$, contradicting that $u$ has maximum degree.
\end{proof}

\begin{lemma}\label{lem:two-sided-chvatal}
Let $G$ be a $2$-connected graph and let $x$ and $y$ be distinct vertices of $G$.  If $\alpha(G-\{x,y\})\le 2$, then every $xy$-fiber contains every vertex in $G$ of maximum degree or $G-\{x,y\}$ is the disjoint union of two complete graphs.
\end{lemma}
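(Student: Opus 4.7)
The plan is to adapt \Cref{lem:one-sided-chvatal}, now working in $G - \{x, y\}$ rather than $G - x$. Suppose for contradiction that $u$ is a vertex of maximum degree lying off the $xy$-fiber $P$. Let $H$ be the component of $G - V(P)$ containing $u$, let $S$ be the set of attachment points of $H$ on $P$ with $k = |S|$, and let $t = |V(H)|$.

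First I would reduce to $k = 2$ and $H$ complete. By \Cref{lem:x-fiber}, every vertex of $S$ lies strictly interior to $P$; 2-connectivity forces $k \ge 2$ (since $S$ separates $V(H)$ from $\{x, y\}$); and \Cref{lem:indep-set}(1), combined with a maximum independent set of $H$, produces an independent set in $G - \{x, y\}$ of size $(k-1) + \alpha(H)$. The hypothesis $\alpha(G - \{x, y\}) \le 2$ then forces $k = 2$ and $\alpha(H) = 1$, so $H$ is a clique. Write $S = \{s_1, s_2\}$ in order along $P$, and set $B = V(P(s_1, s_2))$ and $C = V(P(s_2, y])$.

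Next I would invoke \Cref{lem:minihammer}. Any Hall violator for the $(S, V(H))$-bigraph would give a cut of $G$ of size less than $k$, contradicting 2-connectivity, so the matching hypothesis of \Cref{lem:minihammer} holds. Part (1) places $s_1$ and $s_2$ non-consecutively on $P$ (otherwise there is an augmenting interior splice, lengthening the $xy$-path), and part (2) applied to the interior component $B$ of $P - S$ gives $|B| \ge t$ for the same reason. Since $N(u) \subseteq V(H) \cup S$, we have $\Delta(G) = d(u) \le (t - 1) + 2 = t + 1$. The vertex $s_2$ has a neighbor in $V(H)$, its immediate predecessor on $P$ in $B$, and its immediate successor on $P$ in $C$ (using $s_2 \neq y$); comparing these three disjoint contributions with $\Delta(G) \le t + 1$ shows that $s_2$ has at most $t - 1$ neighbors in each of $V(H)$ and $B$. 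Using $|V(H)| = t$ and $|B| \ge t$, we then locate non-neighbors $z \in V(H)$ and $w_1 \in B$ of $s_2$.

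Finally, the triple $\{s_2, z, w_1\}$ is an independent set in $G - \{x, y\}$: the pairs $\{s_2, z\}$ and $\{s_2, w_1\}$ are non-edges by choice, and $\{z, w_1\}$ is a non-edge because $w_1 \in V(P) \setminus S$ is not an attachment point of $H$. This forces $\alpha(G - \{x, y\}) \ge 3$, contradicting the hypothesis. The main subtlety is that $P$ is only an $xy$-fiber, not a longest path, so only splices and detours preserving the endpoint pair $\{x, y\}$ yield contradictions; this restricts us to bounds on the interior segment $|B|$ and gives no information on the end segments $P[x, s_1)$ or $C$. The saving grace is that a single interior component of length at least $t$ already produces both non-neighbors $z$ and $w_1$, so the alternative conclusion—that $G - \{x, y\}$ is a disjoint union of two complete graphs—need not be invoked in this direct approach.
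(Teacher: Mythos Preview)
Your invocation of \Cref{lem:x-fiber} is invalid here: that lemma requires $P$ to be an $x$-fiber or a $y$-fiber, but an $xy$-fiber is merely a longest $xy$-path, a strictly weaker hypothesis, and nothing prevents $s_1 = x$ or $s_2 = y$. Your degree count on $s_2$ relies on $s_2$ having a successor on $P$ (the neighbor in $C$), which fails when $s_2 = y$; and even if non-neighbors $z \in V(H)$ and $w_1 \in B$ can still be found, the triple $\{s_2, z, w_1\}$ then contains $y$ and is not an independent set in $G - \{x, y\}$. Your own final paragraph correctly notes that $P$ being only an $xy$-fiber gives no control over the end segments, yet the earlier appeal to \Cref{lem:x-fiber} contradicts this.

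This is not a patchable technicality. When $(s_1, s_2) = (x, y)$, a maximum-degree vertex can genuinely lie off $P$, and the escape clause in the statement is exactly what rescues it. Concretely: let $W$ and $V(H)$ be disjoint $t$-cliques with $t \ge 2$, let $x$ be adjacent to one vertex of each, and let $y$ be adjacent to a different vertex of each. Then $G$ is $2$-connected with $\alpha(G-\{x,y\})=2$; every $xy$-fiber spans one clique and misses the other; the missed clique has attachment set exactly $\{x, y\}$; and it contains vertices of maximum degree $t$. The paper's proof handles this by first showing $B = V(P(s_1,s_2))$ is a clique (a non-edge there plus a vertex of $H$ would give three independent vertices in $G-\{x,y\}$), then branching: if $(s_1,s_2)=(x,y)$ it outputs the second disjunct, and otherwise it runs exactly your degree argument on whichever $s_i$ differs from its corresponding endpoint.
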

\begin{proof}
Let $P$ be an $xy$-fiber, let $u$ be a vertex of maximum degree not on $P$, and let $H$ be the component of $G-V(P)$ containing $u$.  Let $\{s_1,\ldots,s_r\}$ be the set of attachment points of $H$, with indices increasing from $x$ to $y$, and let $S=\{s_1,\ldots,s_r\}$.  Since $G$ is $2$-connected, we have $r\ge 2$, or else deleting $S$ separates $H$ from $V(P)-S$ (which is non-empty since $x\ne y$).  By \Cref{lem:indep-set}, there is an independent set $A\subseteq V(P-\{x,y\})$ such that $|A|=r-1$ and there are no edges joining $A$ and $V(H)$.  Therefore $1 + 1 \le (r-1) + \alpha(H) \le \alpha(G-\{x,y\}) \le 2$.  It follows that $r=2$ and $\alpha(H)=1$.

Let $t=|V(H)|$.  Note that $H$ is complete and, since $G$ is $2$-connected, there is a matching in the induced $(S,V(H))$-bigraph saturating $S$ or $|V(H)|=1$.  By \Cref{lem:minihammer}, we have $|V(P(s_1,s_2))|\ge t$ or else there is an augmenting interior splice of $P$ replacing $P(s_1,s_2)$, contradicting that $P$ is an $xy$-fiber.

Let $W=V(P(s_1,s_2))$.  Note that $W$ is a clique, or else a non-adjacent pair of vertices in $W$ together with a vertex in $H$ gives an independent set of size $3$, contradicting $\alpha(G-\{x,y\})\le 2$.

If $(x,y)=(s_1,s_2)$, then $G-\{x,y\}$ is the disjoint union of the complete graph $H$ and the complete graph on $W$.  Otherwise, if $x\ne s_1$, then $s_1$ has a non-neighbor in $H$ and a non-neighbor in $W$, or else $d(s_1)\ge t+2>d(u)$.  So $s_1$ together with a non-neighbor in $W$ and a non-neighbor in $H$ form an independent set of size $3$ in $G-\{x,y\}$, a contradiction.  The case that $y\ne s_2$ is similar.
\end{proof}

The \emph{block-cutpoint graph} of a graph $G$ is a bipartite graph $H$ in which one part consists of the cut-vertices of $G$ and the other has a vertex $b_i$ for each block $B_i$ of $G$. Moreover, $vb_i$ is an edge of $H$ if and only if $v \in B_i$. When $G$ is connected, its block-cutpoint graph is a tree whose leaves are the blocks of $G$ (see, e.g., \cite{West}). We say that a block $B$ of a graph $G$ is \emph{special} if every longest path in $G$ contains an edge in $B$. 

\begin{lemma}\label{lem:special-block}
If no cut-vertex in a connected graph $G$ is Gallai, then $G$ has a special block.
\end{lemma}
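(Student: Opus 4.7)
The plan is to translate the statement into the block-cutpoint tree $T$ of $G$ and invoke the Helly property for subtrees of a tree. For each longest path $P$ of $G$, let $T(P)$ denote the subgraph of $T$ whose vertex set consists of the block-vertex $b_i$ for each block $B_i$ that contains some edge of $P$, together with each cut-vertex of $G$ that lies on $V(P)$.

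First I would verify that $T(P)$ is a subtree of $T$. Walking $P$ from one endpoint to the other partitions the edges of $P$ into consecutive maximal runs, each lying inside a single block; adjacent runs meet at a cut-vertex of $G$ that is on $P$. Moreover, once $P$ leaves a block $B$ through a cut-vertex $c$, it cannot re-enter $B$, since any path from the rest of $G$ back into $B$ must pass through $c$ and $P$ is simple. Thus the blocks visited by $P$, alternating with the intervening cut-vertices on $P$, form a path in $T$, which in particular is a subtree.

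Next I would show that $V(T(P_1))\cap V(T(P_2))\ne\emptyset$ for any two longest paths $P_1$ and $P_2$. The classical observation that any two longest paths of a connected graph share a vertex supplies some $v\in V(P_1)\cap V(P_2)$. If $v$ is a cut-vertex of $G$, then $v$ lies in both $V(T(P_1))$ and $V(T(P_2))$. Otherwise $v$ belongs to a unique block $B$; since $v$ is not an isolated vertex of either $P_i$, some edge of $P_i$ incident to $v$ lies in $B$, and hence the block-vertex for $B$ lies in $V(T(P_1))\cap V(T(P_2))$.

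Finally, the family $\{T(P):P\text{ is a longest path of }G\}$ is a pairwise-intersecting family of subtrees of the tree $T$, so by the Helly property for subtrees of a tree it has a common node $w$. If $w$ is a block-vertex $b_i$, then every longest path of $G$ uses an edge of $B_i$, so $B_i$ is a special block and we are done. If instead $w$ is a cut-vertex of $G$, then every longest path contains $w$, making $w$ a Gallai cut-vertex and contradicting the hypothesis. The main subtle point is the first step: one must rule out that a longest path re-enters a block, so that $T(P)$ is genuinely a subtree and Helly applies.
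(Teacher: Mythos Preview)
Your argument is correct, but it follows a genuinely different route from the paper.

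The paper argues by contradiction: assuming no block is special and no cut-vertex is Gallai, it orients each vertex of the block--cutpoint tree $T$ toward a neighbor witnessing the failure (a block $B$ points to the cut-vertex $x$ on the side of $G-E(B)$ containing a longest path missing $E(B)$; a cut-vertex $x$ points to the block on the side of $G-x$ containing a longest path missing $x$). Since this produces $|V(T)|>|E(T)|$ arcs, two opposite arcs $Bx$ and $xB$ must use the same edge of $T$, and the two witnessing longest paths are then vertex-disjoint, contradicting the classical fact that any two longest paths meet.

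Your approach instead associates to each longest path $P$ a subtree $T(P)$ of $T$ and applies the Helly property for subtrees. The verification that $T(P)$ is connected is the only point requiring care, and your sketch handles it: consecutive edges of $P$ lie either in the same block or in two blocks sharing the transition cut-vertex, and a path cannot re-enter a block once it leaves (distinct vertices of a block lie in distinct components of $G-E(B)$). The extra cut-vertices of $G$ lying on $P$ that are not transition points simply attach as pendants to the spine, so $T(P)$ remains a subtree. Pairwise intersection of the $T(P)$'s follows exactly as you say from the common vertex of any two longest paths.

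Both proofs ultimately rest on the same classical fact (any two longest paths intersect). Your argument is more conceptual and makes the role of the tree structure explicit via Helly; the paper's argument is more self-contained, avoiding any appeal to Helly by a direct pigeonhole on orientations. One minor remark: your phrasing suggests $T(P)$ is itself a path in $T$, but as noted it can be a caterpillar (because of non-transition cut-vertices on $P$); this does not affect the argument, since Helly only needs subtrees.
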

\begin{proof}
Let $G$ be a connected graph such that no cut-vertex is Gallai. Suppose for a contradiction that no block of $G$ is special.  Let $T$ be the block-cutpoint tree of $G$. We construct a digraph $D$ on $V(T)$ in which each vertex has out-degree $1$.  Let $B$ be a block in $G$. We identify a particular cut-vertex $x\in V(B)$ and we include the directed edge $Bx$ in $D$.  Since $B$ is not special, some longest path of $G$ is contained in some component $H$ of $G-E(B)$.  Note that $H$ and $B$ have exactly one vertex in common, and we take $x$ to be this cut-vertex.  

Let $x$ be a cut-vertex in $G$.  We specify a particular block $B$ that contains $x$ and we include the directed edge $xB$ in $D$.  Since $x$ is not Gallai, some component $H$ of $G-x$ contains a longest path in $G$.  Let $B$ be the block containing $x$ such that $B-x\subseteq H$.  We add the directed edge $xB$ to $E(D)$.  

Since $|E(D)| = |V(T)| > |E(T)|$, it follows that there is a block $B$ and a cut-vertex $x$ such that both $Bx$ and $xB$ are edges in $D$.  This implies that $G$ has vertex-disjoint longest paths, contradicting the fact that every two longest paths in a connected graph share at least one vertex.
\end{proof}

\begin{lemma}\label{lem:special-to-Gallai}
If $G$ is a connected graph, $\alpha(G)\le 4$, and $G$ has a special block, then $G$ has a Gallai vertex.
\end{lemma}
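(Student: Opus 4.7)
The plan is to case on whether the special block $B$ is a single edge or is $2$-connected. If $B = uv$ is a bridge, then every longest path of $G$ uses the edge $uv$, so $u$ is a Gallai vertex and we are done.

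Suppose now that $B$ is $2$-connected. Let $C$ be the set of cut-vertices of $G$ lying in $V(B)$. For each $c \in C$, let $H_c$ be the subgraph of $G$ induced by $c$ together with all components of $G - c$ disjoint from $V(B) - c$, and let $\ell_c$ denote the maximum length of a path in $H_c$ with one endpoint at $c$; set $\ell_v = 0$ for $v \in V(B) \setminus C$. A first step is a structural observation: every longest path $P$ of $G$ consists of a (possibly trivial) path $Q_1$ in some branch $H_{\hat x}$ ending at $\hat x$, followed by an $\hat x \hat y$-path $P_B$ in $B$, followed by a path $Q_2$ in some branch $H_{\hat y}$ starting at $\hat y$, where $\hat x, \hat y \in V(B)$ and the extensions $Q_1, Q_2$ have lengths $\ell_{\hat x}$ and $\ell_{\hat y}$ respectively. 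Moreover, $P_B$ is a longest $\hat x \hat y$-path in $B$, so $P \cap B$ is an $\hat x \hat y$-fiber in $B$.

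I would then apply the Chv\'atal-type fiber lemmas to $B$. When $C = \emptyset$, we have $B = G$, and \Cref{thm:zero-sided-chvatal} with $k = 2$ (using $\alpha(G) \le 4$) directly gives that a vertex of maximum degree is Gallai. When $|C| \ge 1$, fix a cut-vertex $c$; since $H_c - c$ is nonempty and is independent from $V(B) - c$ in $G$, we have $\alpha(B - c) \le \alpha(G) - 1 \le 3$. This lets \Cref{lem:one-sided-chvatal} apply to $B$ with distinguished vertex $c$, forcing every $c$-fiber in $B$ to contain a vertex $v^*$ of $B$ of maximum $B$-degree; a symmetric use of \Cref{lem:two-sided-chvatal} handles pairs $\{c_i, c_j\}$ of cut-vertices, using the bound $\alpha(B - \{c_i, c_j\}) \le 2$ that follows when two nontrivial branches are present.

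The main obstacle I anticipate is combining these casewise applications into a single Gallai vertex of $G$. I expect the vertex of maximum $B$-degree to lie on $P_B$ uniformly over all optimizing pairs $(\hat x, \hat y)$, but proving this may require an independent-set argument: if two longest paths yield $P_B$'s missing different witnesses, the vertices separating those witnesses together with vertices contributed by the nontrivial branches should form an independent set of size at least $5$ in $G$, contradicting $\alpha(G) \le 4$. A secondary subtlety is the exceptional conclusion of \Cref{lem:two-sided-chvatal}, where $B - \{\hat x, \hat y\}$ splits as a disjoint union of two complete graphs; in that structurally rigid case I would proceed by direct inspection, using that $B$ is special to force a vertex of the larger clique onto every longest $\hat x \hat y$-path of $B$.
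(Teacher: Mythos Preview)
Your overall strategy matches the paper's: case on how a longest path $P$ meets the special block $B$, note that $P\cap B$ is a fiber, an $x_i$-fiber, or an $x_ix_j$-fiber in $B$ (for cut-vertices $x_i,x_j$), and invoke \Cref{thm:zero-sided-chvatal}, \Cref{lem:one-sided-chvatal}, or \Cref{lem:two-sided-chvatal} respectively.  Your bounds $\alpha(B-c)\le 3$ and $\alpha(B-\{c_i,c_j\})\le 2$ are correct and are exactly what the paper uses.  Moreover, your worry about ``combining these casewise applications into a single Gallai vertex'' is misplaced in the generic situation: all three lemmas guarantee that \emph{every} vertex of maximum $B$-degree lies on the relevant fiber, so the same vertex $u$ with $d_B(u)=\Delta(B)$ works simultaneously.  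No separate independent-set argument is needed there.

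The genuine gap is your treatment of the exceptional outcome of \Cref{lem:two-sided-chvatal}.  Saying you would ``force a vertex of the larger clique onto every longest $\hat x\hat y$-path'' is not a proof, and in fact the max-$B$-degree heuristic fails here.  Concretely, the paper observes that $\alpha(G)\le 4$ forces $|C|\le 4$; when $|C|=4$ one has $|V(B)|=4$ and $B$ contains a $4$-cycle $c_1c_2c_3c_4$.  If the external branch lengths satisfy $\ell_1=\ell_3\gg \ell_2,\ell_4$, then every longest path restricts to a $c_1c_3$-fiber in $B$, and $B-\{c_1,c_3\}=2K_1$ is precisely the exceptional case; one such fiber is $c_1c_2c_3$, which omits $c_4$, a vertex of maximum $B$-degree.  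So no vertex chosen purely by $B$-degree can be Gallai.  The paper resolves the exceptional cases by a different mechanism: for $|C|\in\{2,3\}$ it shows $B$ has a Hamiltonian cycle (or a spanning $x_ix_j$-path) and takes a specific cut-vertex as the Gallai vertex; for $|C|=4$ it selects the cut-vertex $x_i$ maximizing the external fiber length $\ell_{x_i}$ and reroutes through $x_i$ using the $4$-cycle.  Your proposal needs these explicit constructions (or substitutes for them) to close the argument.
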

\begin{proof}
Let $G$ be a connected graph with $\alpha(G) \le 4$ and with a special block $B$. Let $S$ be the set of cut-vertices in $B$, with $S=\{x_1,\ldots,x_k\}$.  Since $\alpha(G)\le 4$, we have $k\le 4$. 

Case $k=0$.  In this case, $G=B$ and so $G$ is $2$-connected.  It follows from \Cref{thm:zero-sided-chvatal} that $G$ has a Gallai vertex.

Case $k=1$.  Let $u\in V(B)$ with $d_B(u)= \Delta(B)$.  We claim that $u$ is a Gallai vertex in $G$.  Let $P$ be a longest path in $G$.  If $P$ is contained in $B$, then $u\in V(P)$ by \Cref{thm:zero-sided-chvatal}.  If $P$ leaves $B$ through the cut-vertex $x_1$, then $P\cap B$ is an $x_1$-fiber in $B$ and it follows that $u\in V(P)$ by \Cref{lem:one-sided-chvatal}.

Case $k=2$. Suppose first that $B-S$ is not the disjoint union of two complete graphs.  Let $u\in V(B)$ with $d_B(u)=\Delta(B)$.  We claim that $u$ is a Gallai vertex.  Let $P$ be a longest path in $G$.  Since $B$ is special, it follows that $P\cap B$ is a nontrivial subpath of $P$.  Note that, as a subgraph of $B$, the path $P\cap B$ is either a fiber, an $x_1$-fiber or an $x_2$-fiber, or an $x_1x_2$-fiber, depending on whether $P$ has two, one, or zero endpoints in $B$, respectively.  It follows from \Cref{thm:zero-sided-chvatal}, \Cref{lem:one-sided-chvatal}, or \Cref{lem:two-sided-chvatal} that $u\in V(P\cap B)$, respectively.

Otherwise, suppose that $B-S$ is the disjoint union of two complete graphs $W_1$ and $W_2$ (see \Cref{fig:lemma19}).  Since $B$ is $2$-connected, for $i\in\set{1,2}$, there is a matching in the induced $(S,V(W_i))$-bigraph saturating $S$ or $|V(W_i)|=1$.  Also, since $S$ is a minimum cut in $B$, each vertex in $S$ has neighbors in $V(W_1)$ and $V(W_2)$.  It follows that $B$ has a Hamiltonian cycle.  We claim that $x_2$ is a Gallai vertex.  Let $P$ be a longest path in $G$, and suppose for a contradiction that $x_2\not\in V(P)$. Since $B$ is special, $P$ has at least one endpoint in $B$.  Replacing the subpath of $P$ inside $B$ with an appropriate Hamiltonian path gives a longer path in $G$.

\begin{figure}[h!]
\centering 
\includegraphics[scale=0.9]{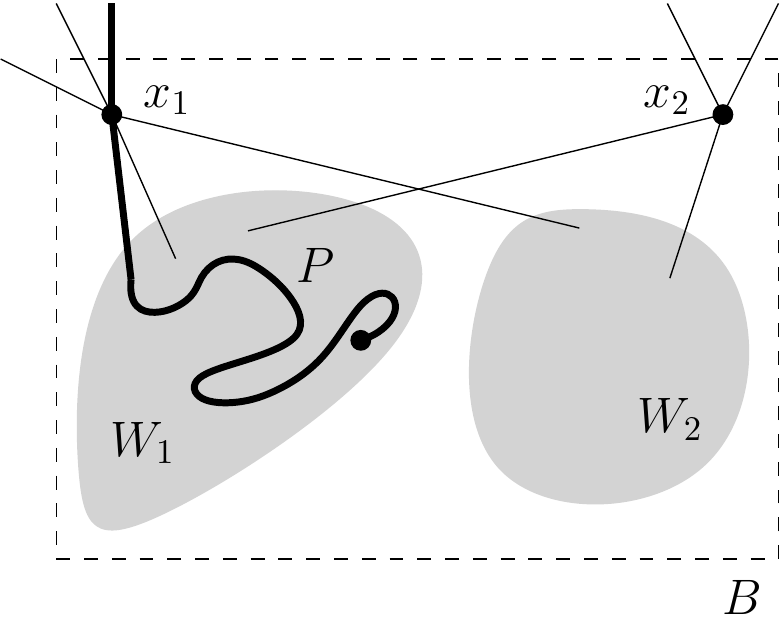}
\caption{Case $k = 2$ in the proof of \Cref{lem:special-to-Gallai}.}
\label{fig:lemma19}
\end{figure}

Case $k=3$. Note that $B-S$ is a complete graph $W_1$ or else $\alpha(G)> 4$.  Suppose there is a pair of cut-vertices, say $\{x_1,x_3\}$, such that $B-\{x_1,x_3\}$ is the disjoint union of two complete graphs.  These are necessarily $W_1$ and the $1$-vertex subgraph consisting of $x_2$; let $W_2$ be this $1$-vertex subgraph.  As in the case $k=2$, it follows that $B$ has a Hamiltonian cycle containing $x_1x_2x_3$ as a subpath.  We claim that $x_3$ is a Gallai vertex.  Let $P$ be a longest path in $G$ and suppose for a contradiction that $x_3\not\in V(P)$.  Note that $P$ cannot have an endpoint in $B$, or else replacing $P\cap B$ with an appropriate Hamiltonian path gives a longer path in $G$. Therefore, as a subgraph of $B$, the path $P\cap B$ is an $x_1x_2$-fiber.  But $B$ has a spanning $x_1x_2$-path, contradicting $x_3\not\in V(P)$.  

Otherwise, there is no pair of cut-vertices whose removal from $B$ results in the disjoint union of two complete graphs.  Let $u\in V(B)$ with $d_B(u)=\Delta(B)$.  We claim that $u$ is a Gallai vertex.  Let $P$ be a longest path in $G$.  It follows that, as a subgraph of $B$, the path $P\cap B$ is a fiber, an $x_i$-fiber for some $x_i\in S$, or an $x_ix_j$-fiber for some $x_i,x_j\in S$, depending on whether $P$ has two, one, or zero endpoints in $B$, respectively.  It follows from \Cref{thm:zero-sided-chvatal}, \Cref{lem:one-sided-chvatal}, or \Cref{lem:two-sided-chvatal} that $u\in V(P\cap B)$, respectively.

Case $k=4$.  The condition $\alpha(G)\le 4$ requires that $|V(B)|=4$ and $2$-connectivity requires that $B$ contains a $4$-cycle $C$.  Let $x_i$ be a cut-vertex in $B$ which maximizes the length of an $x_i$-fiber in $G-E(B)$.  We claim that $x_i$ is a Gallai vertex.  Let $P$ be a longest path in $G$, and suppose for a contradiction that $x_i\not\in V(P)$.  The path $P$ decomposes into three subpaths $P_1$, $P_2$, and $P_3$, where $P_2=P\cap B$.  Let $x_j$ be the vertex in $V(P_1)\cap V(P_2)$, and let $x_k$ be the vertex in $V(P_2)\cap V(P_3)$.  Since $|V(B)| = 4$, it follows that $x_j$ or $x_k$ is a neighbor of $x_i$ in $C$.  If $x_kx_i\in E(C)$, then we find a longer path in $G$ by keeping $P_1$, extending $P_2$ by the edge $x_kx_i$ to obtain $P'_2$, and replacing $P_3$ with an $x_i$-fiber $P'_3$ in $G-E(B)$.  Since $P'_2$ is longer than $P_2$ and $P'_3$ is at least as long as $P_3$ by our choice of $x_i$, the path obtained by combining $P_1$, $P'_2$, and $P'_3$ is longer than $P$.  The case $x_jx_i\in E(C)$ is symmetric.
\end{proof}

Applying our lemmas gives the following.

\begin{theorem}\label{thm:5P1}
Let $G$ be a connected graph. If $\alpha(G)\le 4$, then $G$ has a Gallai vertex.  Equivalently, $5P_1$ is a fixer.
\end{theorem}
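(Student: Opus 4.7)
The plan is a short synthesis of the preceding lemmas. First I would handle the trivial base case: if $|V(G)|=1$, the unique vertex is Gallai, so I may assume $|V(G)|\ge 2$.

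Next, I would split on whether some cut-vertex of $G$ is Gallai. If so, we are done. Otherwise, no cut-vertex is Gallai, and I want to invoke Lemma~\ref{lem:special-block} to obtain a special block of $G$. The only mild wrinkle is that the statement of Lemma~\ref{lem:special-block} is most naturally read when $G$ has at least one cut-vertex; when $G$ has no cut-vertex (i.e., $G$ is $2$-connected or $G=K_2$), I would instead directly observe that $G$ is its own unique block and that every longest path of $G$ uses at least one edge of $G$, so this block is special. In either situation we obtain a special block $B$.

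With $B$ in hand and the hypothesis $\alpha(G)\le 4$ available, Lemma~\ref{lem:special-to-Gallai} immediately produces a Gallai vertex of $G$, establishing the main statement. The reformulation as ``$5P_1$ is a fixer'' is then purely definitional: an induced $5P_1$ in $G$ is the same as an independent set of size $5$, so $G$ is $5P_1$-free exactly when $\alpha(G)\le 4$.

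Since all of the genuine combinatorial content has been packaged into Lemmas~\ref{lem:special-block} and~\ref{lem:special-to-Gallai} (and through the latter into Theorem~\ref{thm:zero-sided-chvatal} together with Lemmas~\ref{lem:one-sided-chvatal} and~\ref{lem:two-sided-chvatal}), I expect no real obstacle at this final step. The trickiest conceptual point is remembering that the ``no cut-vertex'' case should be handled by certifying $G$ directly as its own special block rather than by invoking Lemma~\ref{lem:special-block}.
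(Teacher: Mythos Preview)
Your proposal is correct and follows essentially the same route as the paper: split on whether a cut-vertex is Gallai, otherwise apply Lemma~\ref{lem:special-block} to obtain a special block and then Lemma~\ref{lem:special-to-Gallai} to finish. Your extra care with the degenerate cases $|V(G)|=1$ and ``no cut-vertex'' is harmless overcaution---Lemma~\ref{lem:special-block} already covers the latter (if $G$ has no cut-vertex and $|V(G)|\ge 2$, the assumption that the unique block is not special forces a longest path with no edge, a contradiction), and the single-vertex case is trivial anyway.
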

\begin{proof}
If some cut-vertex in $G$ is Gallai, then the claim follows.  Otherwise, we have that $G$ has a special block by \Cref{lem:special-block}, and hence $G$ has a Gallai vertex by \Cref{lem:special-to-Gallai}.
\end{proof}

The graph $G_0$ from \Cref{fig:counter} shows that there is a connected graph $G$ such that $G$ has no Gallai vertex and $\alpha(G)=6$.  The case $\alpha(G)\le 5$ remains open.

\begin{conjecture}\label{conj:indep-5}
If $\alpha(G)\le 5$ and $G$ is connected, then $G$ has a Gallai vertex.
\end{conjecture}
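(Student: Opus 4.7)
The plan is to mirror the proof architecture of \Cref{thm:5P1}. That proof combines a $2$-connected case (\Cref{thm:zero-sided-chvatal}), two fiber variants handling $x$-fibers and $xy$-fibers inside a single block (\Cref{lem:one-sided-chvatal} and \Cref{lem:two-sided-chvatal}), and a block-cutpoint case analysis that patches these together (\Cref{lem:special-block} and \Cref{lem:special-to-Gallai}). My approach is to strengthen each of these ingredients from the hypothesis $\alpha(G) \le k+2$ to $\alpha(G) \le k+3$ and then reassemble via the same block-cutpoint tree argument; note that \Cref{lem:special-block} requires no change, since its proof is purely combinatorial on the block-cutpoint tree.

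The central intermediate step is an $\alpha(G) \le k+3$ version of \Cref{lem:hammer}. The counting $\alpha(H) + (r+1) \le \alpha(G)$ now splits into two sub-cases: either $r = k$ and $\alpha(H) \le 2$, or $r = k+1$ and $H$ is complete. In the second sub-case, the existing proof carries through with one extra attachment point to track and a slight loss in the critical degree counts appearing in \Cref{thm:zero-sided-chvatal}. In the first sub-case, $H$ may contain an induced $2P_1$, but $\overline{H}$ is triangle-free (in fact bipartite), so $H$ is still dense enough to support modified splice and detour constructions at a controlled length cost. With the updated hammer in hand, I would re-derive an $\alpha(G) \le 5$ analogue of \Cref{thm:zero-sided-chvatal} (for $k \in \{1,2\}$), then the corresponding $x$-fiber and $xy$-fiber analogues, and then the block analysis. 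In the block analysis, a special block $B$ can now carry up to $5$ cut-vertices; for $k \ge 3$ cut-vertices in $B$, the residue $B - S$ satisfies $\alpha \le 5-k$, forcing a very rigid structure on $B$ that should yield a universal Gallai vertex via the spanning-path trick used for $k \in \{2,3\}$ in the proof of \Cref{lem:special-to-Gallai}; the case $k=5$ pins $|V(B)| = 5$ and reduces to a short explicit list.

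The main obstacle I anticipate is the $2$-connected case when the missed component $H$ satisfies $\alpha(H) = 2$. The detour-and-splice machinery of \Cref{lem:minihammer} relies heavily on $H$ being complete so that any two attachment points can be joined by a path of length $|V(H)|+1$ through $H$, and this collapses when $H$ contains a non-edge. I expect to need a secondary dichotomy: either the attachment-to-component matching is still rich enough --- exploiting that $\overline{H}$ is bipartite --- to recover a long path through $H$ between any two prescribed attachment points, or the failure of this richness forces $G$ into a structure that embeds the $\alpha = 6$ graph $G_0$ (or one of a small family of Petersen-based relatives) as a sub-configuration, and therefore cannot occur when $\alpha(G) \le 5$. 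Identifying this family as the unique barrier to the argument, and ruling it out under the conjecture's hypothesis, is in my estimation the sharpest and hardest step, and is also where the tightness of $\alpha(G) \le 5$ is witnessed.
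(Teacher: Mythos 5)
This statement is an open conjecture in the paper, not a theorem: the authors explicitly write that the case $\alpha(G)\le 5$ remains open, that \Cref{thm:ChvatalErdos} handles $\kappa(G)\ge 3$ outside finitely many cases, that $\kappa(G)=1$ plausibly yields to a block-structure analysis, and that it is unclear how to handle $\kappa(G)=2$. Your proposal reproduces exactly this reduction and correctly locates the obstruction, but it does not overcome it, so what you have is a research plan with the decisive step missing rather than a proof. Concretely: the entire splice-and-detour machinery of \Cref{lem:minihammer} and \Cref{lem:hammer} rests on the missed component $H$ being complete, which is precisely what the count $\alpha(H)+(r+1)\le \alpha(G)\le k+2$ forces. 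Under $\alpha(G)\le k+3$ you only get $\alpha(H)\le 2$ (or $r=k+1$ with $H$ complete), and a connected graph with independence number $2$ need not admit a spanning path between two prescribed vertices --- take two cliques sharing a single cut-vertex and prescribe two endpoints in the same clique --- so the patching paths required in parts 2--4 of \Cref{lem:minihammer} may simply fail to exist. The two steps you offer to repair this are both unsupported: the parenthetical claim that $\overline{H}$ is bipartite is false ($\alpha(H)\le 2$ only makes $\overline{H}$ triangle-free; $H=\overline{C_5}=C_5$ is a counterexample), and the dichotomy ``either the attachment structure is rich enough to recover long paths through $H$, or $G$ embeds $G_0$ or a small family of relatives'' is asserted with no argument. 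That dichotomy \emph{is} the open problem in the hard case; everything surrounding it in your plan is plausible bookkeeping, but stating the crux as an expectation does not discharge it.

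A secondary, fixable issue: in your block analysis you assert that a special block $B$ with $k\ge 3$ cut-vertices forces $\alpha(B-S)\le 5-k$ and hence a rigid structure, but the paper's case $k=3$ already shows the analysis bifurcates on whether some pair of cut-vertices disconnects $B$ into two cliques, and with $\alpha(G)\le 5$ the residues $B-S$ can be disjoint unions of \emph{two} cliques rather than one, so the ``short explicit list'' will be longer than you suggest. None of this is fatal, but it is additional work you have not done. The honest summary is that your proposal matches the authors' own roadmap for the conjecture and stalls at the same place they do.
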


When $G$ is $3$-connected, $\alpha(G)\le 5$, and $G$ is sufficiently large, \Cref{thm:ChvatalErdos} shows that $G$ has a Gallai vertex.  Outside of a finite number of cases when $\kappa(G)\ge 3$, resolving \Cref{conj:indep-5} reduces to the cases that $\kappa(G)=1$ and $\kappa(G)=2$.  Although it is reasonable to expect that the case $\kappa(G)=1$ may be treated by analyzing the block structure of $G$, it is less clear how to handle the case $\kappa(G)=2$.

\section{A Chv\'atal--Erd\H{o}s type result}\label{sec:CELR}

A celebrated result of \citet{CE72} states that if $\alpha(G)\le \kappa(G)$, then $G$ has a Hamiltonian cycle, and the same technique shows that $G$ has a Hamiltonian path when $\alpha(G) \le \kappa(G)+1$.  Clearly, when $G$ has a Hamiltonian path, every vertex in $G$ is Gallai.  We show that if $\alpha(G)\le \kappa(G)+2$ and $G$ is sufficiently large in terms of $\kappa(G)$, then the maximum degree vertices in $G$ are Gallai. 
  
\begin{theorem}\label{thm:ChvatalErdos}
For each positive integer $k$, there exists an integer $n_0$ such that if $G$ is an $n$-vertex $k$-connected graph with $\alpha(G)\le k+2$ and $n\ge n_0$, then each vertex of maximum degree is Gallai.
\end{theorem}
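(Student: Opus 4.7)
Plan: I would argue by contradiction. Let $P=v_0\cdots v_\ell$ be a longest path in $G$ and suppose $u\notin V(P)$ has $d(u)=\Delta(G)$. Let $H$ be the component of $G-V(P)$ containing $u$, put $t=|V(H)|$, and let $S=\{s_1,\ldots,s_k\}$ be the attachment points of $H$ on $P$ in order. By Lemma~\ref{lem:hammer}, $H$ is complete on $t$ vertices, $|S|=k$, and $P-S$ decomposes into $k+1$ subpaths $W_1,\ldots,W_{k+1}$ each of size at least $t$; parts~4 and~5 of the lemma control cross-component adjacencies. Since $N(u)\subseteq(V(H)\setminus\{u\})\cup S$, we have $\Delta(G)=d(u)\le t+k-1$, so every vertex of $G$ has degree at most $t+k-1$.

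Next, following the argument at the end of the proof of Theorem~\ref{thm:zero-sided-chvatal}, for each attachment point $s_i\in S$ either $N(s_i)\supseteq V(H)$ or $N(s_i)\supseteq f(W)$ for some component $W$ of $P-S$. Otherwise, combining $s_i$, a non-neighbor of $s_i$ in $V(H)$, and the lowest-ranked non-neighbor of $s_i$ in each component of $P-S$ would yield an independent set of size $k+3$ by Lemma~\ref{lem:hammer} parts~4 and~5, contradicting $\alpha(G)\le k+2$. In either case $d(s_i)\ge t+2$. Since $k\in\{1,2\}$ is already handled by Theorem~\ref{thm:zero-sided-chvatal} without any size hypothesis, we may assume $k\ge 3$, in which case the bound $d(s_i)\ge t+2$ does not yet contradict $\Delta(G)\le t+k-1$, and we must invoke $n\ge n_0$.

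The idea for the remainder is to bound every piece of the decomposition by a function of $k$ alone, thereby bounding $n$. First, the number of components of $G-V(P)$ is at most $k+2$, since selecting one vertex from each component yields an independent set in $G$; each such component is complete with $k$ attachment points by Lemma~\ref{lem:hammer}, and its size is capped by $\Delta(G)\le t+k-1$. Second, to bound $|W_j|$, one argues that if $|W_j|$ is much larger than $t$, then $W_j$ contains a vertex $w$ of rank at least $t$; combining $w$ with rank-zero representatives of the other $k$ components of $P-S$ and a suitable vertex of $V(H)$ should yield an independent set of size $k+3$ via the adjacency restrictions of Lemma~\ref{lem:hammer} parts~4 and~5. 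Together with the bound on $t$ obtained from $\Delta(G)\le t+k-1$ and $\delta(G)\ge k$, these bounds give $n\le g(k)$ for some function $g$; taking $n_0>g(k)$ delivers the required contradiction.

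Main obstacle: the bound on $|W_j|$. Lemma~\ref{lem:hammer} part~4 only precludes cross-component edges when the rank sum is strictly less than $t$, so a vertex $w\in W_j$ of rank at least $t$ is \emph{a priori} free to be adjacent to the rank-zero representatives chosen in other components, which would spoil the intended $(k+3)$-element independent set. Overcoming this likely requires a secondary structural analysis (for instance, applying Lemma~\ref{lem:hammer} to an auxiliary longest path rerouted through $w$), or a $k$-connectivity plus degree-sum argument that extracts additional constraints from the tight complete-bipartite structure between $S$ and $V(H)$ that arises in the extremal case $N(s_i)\supseteq V(H)$ for every $i$.
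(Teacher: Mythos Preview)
Your setup through Lemma~\ref{lem:hammer} is correct, and you rightly flag the difficulty with bounding $|W_j|$. But the overall strategy of bounding $n$ by a function $g(k)$ has a more basic gap that you do not mention: you cannot bound $t=|V(H)|$ in terms of $k$. The inequalities $\Delta(G)\le t+k-1$ and $\delta(G)\ge k$ that you cite give no upper bound on $t$ whatsoever; they only pin $\Delta(G)$ relative to $t$. Since the complete component $H$ can a~priori be arbitrarily large (and with it each $W_i$, which has at least $t$ vertices), the plan of deducing $n\le g(k)$ to contradict $n\ge n_0$ cannot get off the ground.

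The paper runs the argument in the opposite direction: it uses $n\ge n_0$ to force $t$ to be \emph{large}. From $\Delta(G)\le t+k-1$ and the crude bound $\alpha(G)\ge n/(\Delta(G)+1)$, if $t\le 2k(k+1)$ then $\alpha(G)>k+2$; hence $t>2k(k+1)$. With $t$ large, a chain of structural facts follows: $H$ is the unique component of $G-V(P)$; each $|W_i|\le t+k(k+1)$ (via a counting argument using the rank-$0$ independent set) and hence each $W_i$ is a clique; each $s_j$ dominates some set among $W_0,\ldots,W_k,V(H)$; and any edge between distinct $W_i,W_j$ must have both endpoints among the $P$-neighbours of $S$ (proved by a reordering trick inside the cliques $W_i$). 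The final contradiction is a degree-sum on $S$: combining the domination edges, the $k$-connectivity matchings from each $W_i$ into $V(G)-W_i$ (which land in $S$ by the previous step), and the immediate $P$-neighbours of each $s_j$, one obtains $\sum_j d(s_j)\ge k(t+k)>k(t+k-1)\ge k\Delta(G)$. Your intended domination claim for $s_i$ does eventually appear in the paper, but only after the clique structure of the $W_i$ is established; the direct lift of the Theorem~\ref{thm:zero-sided-chvatal} argument fails for $k\ge 3$ because the subcase $\rank(w)+\rank(w')\ge t$ can no longer be excluded by the degree bound $d(s_i)\le\Delta(G)$ alone.
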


\begin{proof}
We take $n_0 = k(k+2)(2k+3) + 1$.  Let $P$ be a longest path in $G$ with endpoints $x$ and $y$, and suppose for a contradiction that $u\in V(G)-V(P)$ and $d(u)=\Delta(G)$.  Let $H$ be the component of $G-V(P)$ containing $u$, and let $t = \abs{V(H)}$.  From \Cref{lem:hammer}, it follows that $H$ is complete and $H$ has a set $S$ of $k$ attachment points on $P$.  Let $S=\{s_1,\ldots,s_k\}$ with indices increasing from $x$ to $y$.  For $1\le i< k$, let $W_i = V(P(s_i,s_{i+1}))$; we also define $W_0 = V(P[x,s_1))$ and $W_k=V(P(s_k,y])$.  By \Cref{lem:hammer}, we have that $\abs{W_i}\ge t$ for $0\le i\le k$.  Since $u\in V(H)$, we have that $N(u)\subseteq (V(H)-\{u\}) \cup S$ and therefore $\Delta(G)=d(u)\le (t-1) + k$.  If $t\le 2k(k+1)$, then $\Delta(G)\le k(2k+3)-1$ and so $\alpha(G)\ge n/(\Delta(G) + 1) \ge n/[k(2k+3)] > k+2$, since $n\ge n_0$.  Therefore we may assume that $t > 2k(k+1)$.

We claim that $H$ is the only component of $G-V(P)$.  If $G-V(P)$ contains a second component $H'$, then let $S'$ be the set of attachment points of $H'$ on $P$.  By \Cref{lem:hammer}, it follows that $|S'|=k$.  For each $i$, choose $a_i\in W_i$ among the vertices with ranks in $\{0,\ldots,k\}$ so that $a_i\not\in S'$.  Let $A=\{a_0,\ldots,a_k\}$.  Since $t > 2k(k+1) > 2k$, it follows from \Cref{lem:hammer} that $A$ is an independent set of size $k+1$.  Since $A$ is disjoint from $S\cup S'$, we may extend $A$ to an independent set of size $k+3$ by adding a vertex in $H$ and a vertex in $H'$.  Since $\alpha(G)\le k+2$, we obtain a contradiction, and so $H$ is the only component of $G-V(P)$.

Next, we claim that each vertex $w\in W_i$ has at most $k$ neighbors outside $W_i$.  Let $A$ be the subset of $V(P)-S$ consisting of the vertices $w$ such that $\rank(w)=0$.  By \Cref{lem:hammer}, we have that $A$ is an independent set with $|A|=k+1$.  Note that each vertex $w\in V(P) - (S\cup A)$ has at least one neighbor in $A$, or else $w$ together with $A$ and a vertex in $H$ would give an independent set of size $k+3$.  Since $\abs{A}=k+1$ and $\Delta(G)\le t+k-1$, it follows that $|V(P) - (S\cup A)| \le (k+1)(t+k-1)$ and hence $|V(P)-S|\le (k+1)(t+k)=t(k+1)+k(k+1)$.  Since $V(P)-S = \bigcup_{i=0}^k W_i$ and $|W_i|\ge t$ for each $i$, it follows that $t\le |W_i| \le t+k(k+1)$.  By \Cref{lem:hammer}, in each $W_i$, the $t$ vertices of smallest rank form a clique.  By symmetry, in each $W_i$, the $t$ vertices of largest rank also form a clique.  Since $|W_i|\le t+k(k+1)<2t$, it follows that each vertex in $W_i$ is among the $t$ vertices with smallest rank or the $t$ vertices with largest rank.  In particular, each vertex in $W_i$ has at least $t-1$ neighbors in $W_i$ and hence at most $k$ neighbors outside $W_i$.

It now follows that each $W_i$ is a clique.  Indeed, if $w_i,w'_i\in W_i$ but $w_iw'_i\not\in E(G)$, then we obtain an independent set $A$ with $A\subseteq V(P)-S$ and $|A| = k+2$ as follows.  Starting with $A = \{w_i,w'_i\}$, we add a vertex to $A$ from each $W_j$ with $j\ne i$.  Since $|W_j| \ge t > k(k+1)$ and each of the vertices already in $A$ have at most $k$ neighbors in $W_j$, some vertex in $W_j$ can be added to $A$.  The set $A$ together with a vertex in $H$ gives an independent set of size $k+3$, a contradiction.  Hence each $W_i$ is a clique.  

A vertex $z$ \emph{dominates} a set of vertices $B$ if $z$ is adjacent to each vertex in $B$.  Next, we claim that each $s_i\in S$ dominates some set in  $\{W_0,\ldots,W_k,V(H)\}$.  If some attachment point $s_i$ has more than $k^2$ non-neighbors in each $W_j$ and a non-neighbor $v$ in $H$, then we may obtain an independent set of size $k+3$ by starting with $\{s_i,v\}$ and adding one vertex from each $W_j$.  It follows that each $s_i$ has at least $t-k^2$ neighbors in some set in $\{W_0,\ldots,W_k,V(H)\}$.  Let $W_{k+1} = V(H)$, let $s_i$ be an attachment vertex, and choose $j$ such that $0\le j\le k+1$ and $s_i$ has at least $t-k^2$ neighbors in $W_j$.  We claim that $s_i$ dominates $W_j$.  Indeed, if $w\in W_j$ but $s_iw\not\in E(G)$, then we obtain an independent set $A$ of size $k+3$ starting with $A = \{s_i,w\}$ and adding one vertex from each $W_\ell$ with $0\le \ell\le k+1$ and $\ell\ne j$.  Since $s_i$ has at most $(t+k-1) - (t-k^2)$ neighbors in $W_\ell$, each of the other vertices already in $A$ has at most $k$ neighbors in $W_\ell$, and $|W_\ell| \ge t > (k(k+1) - 1) + (k+1)k$, it follows that $W_\ell$ contains a vertex that can be added to $A$.  Since $\alpha(G)\le k+2$, we obtain a contradiction, and so $s_i$ dominates $W_j$.

Let $1\le i < k$.  Since $W_i$ is a clique and $W_i = V(P(s_i,s_{i+1}))$, we obtain a path $P'$ with $V(P)=V(P')$ and the same set of attachment points by reordering the vertices in $W_i$ arbitrarily, so long as the first vertex is adjacent to $s_i$ and the last vertex is adjacent to $s_{i+1}$.  Similarly, we may reorder $W_0$ provided that the last vertex in $W_0$ is adjacent to $s_1$ and we may reorder $W_k$ provided that the first vertex in $W_k$ is adjacent to $s_k$.  Let $R$ be the set of neighbors of $S$ in $P$.  Note that for each $w\in W_i - R$ and each $q$ with $1\le q\le |W_i| - 2$, we may obtain a path $P'$ with $V(P) = V(P')$ and the same attachment points in which $\rank(w)=q$ by an appropriate reordering of $W_i$.  It follows that if $ww'\in E(G)$, for some $w\in W_i$ and $w'\in W_j$, with $i$ and $j$ distinct in $\{0,\ldots,k\}$, then $w,w'\in R$. Otherwise, we may reorder $W_i$ and $W_j$ to obtain a new path $P'$ in which either $\rank(w) \le 1$ and $\rank(w')\le 1$, or $\rank(w)\ge |W_i| - 2$ and $\rank(w')\ge |W_j| - 2$.  In the latter case, reversing $P'$ gives a path $P''$ in which $\rank(w) \le 1$ and $\rank(w')\le 1$.  This contradicts \Cref{lem:hammer} with respect to $P'$ or $P''$ since $\rank(w)+\rank(w')\le 2$ but $|V(H)| = t > 2k(k+1)\ge 4$.  

We obtain a final contradiction by showing that some attachment point has degree exceeding $\Delta(G)$.  Let $D=\sum_{i=1}^k d(s_i)$ and note that $D\le k(t+k-1)$.  We give a lower bound on $D$ using three sets of edges.  First, for each $s_i$, let $T_i$ be a set of $3$ edges incident to $s_i$ consisting of the edges joining $s_i$ to its two neighbors in $R$ and a third edge joining $s_i$ and a vertex in $H$.  Second, for $0\le i\le k$, there is a matching $M_i$ of size $k$ joining vertices in $W_i$ and $V(G)-W_i$, or else the K\H{o}nig-Egerv\'{a}ry Theorem \citep{West} implies that the induced $(W_i,V(G)-W_i)$-bigraph has a vertex cover of size less than $k$, which is also a vertex cut since $|W_i|,|V(G)-W_i|\ge t > k$. Obtain $M'_i$ from $M_i$ by discarding edges incident to vertices in $W_i\cap R$. Note that $|M'_i| \ge |M_i| - 2 \ge k-2$ always, but for $i\in\{0,k\}$ we have $|M'_i| \ge |M_i| - 1 \ge k-1$.  Suppose that $e\in M'_i$, let $w$ be the endpoint of $e$ in $W_i$, and let $v$ be the other endpoint of $e$ in $V(G)-W_i$.  Since $w$ is not an attachment point, we have $v\not\in V(H)$, and since $H$ is the only component of $G-V(P)$, it follows that $v\in V(P)-W_i$.  Since $w\not\in R$, it follows that $v$ must be an attachment point.  Hence each edge in $M'_i$ joins a vertex in $W_i - R$ and a vertex in $S$.  Moreover, $M'_i$ and $T_j$ are disjoint, as each edge in $T_j$ has an endpoint in $R\cup V(H)$ and no edge in $M'_i$ has such an endpoint.  With $Z=\bigcup_{i=0}^k M'_i \cup \bigcup_{j=1}^k T_j$, we have $|Z| \ge [(k-1)(k-2) + 2(k-1)] + 3k = k(k+2)$.  Third, for $1\le i\le k$, let $F_i$ be the set of edges joining $s_i$ and a set in $\{W_0,\ldots,W_k,V(H)\}$ dominated by $s_i$.  Note that $|F_i \cap Z| \le 2$, since $F_i$ contains at most one edge in $\bigcup_{i=0}^k M'_i$ and at most one edge in $\bigcup_{j=1}^k T_j$.  Let $F=\bigcup_{j=1}^k F_i$, and note that $|F|\ge tk$ and $|F\cap Z| \le 2k$.

We compute $D \ge |F \cup Z| = |F| + |Z| - |F\cap Z| \ge tk + k(k+2) - 2k = tk + k^2 = k(t+k)$.  Since $D\le k(t+k-1)$, it follows that $k(t+k) \le D\le k(t+k-1)$, contradicting that $k$ is positive.
\end{proof}

\begin{example}\label{Ex:BestPossible}
The assumption $\alpha(G)\le\kappa(G)+2$ in \Cref{thm:ChvatalErdos} is best possible. Let $G$ be the graph obtained from the star $K_{1,k+2}$ with leaves $\{x_1,\ldots,x_{k+2}\}$ by replacing the center vertex with a $k$-clique $S$ and replacing each leaf vertex $x_i$ with a $t$-clique $X_i$ containing a set of $k$ distinguished vertices $Y_i$ that are joined to $S$.  Since $V(G)$ can be covered by $k+3$ cliques, we have $\alpha(G)\le k+3$.  Also, we have $\kappa(G)=k$ since $S$ is a cutset of size $k$ and when $R\subseteq V(G)$ and $|R|<k$, the graph $G-R$ contains at least one vertex in each of $S,Y_1,\ldots,Y_{k+2}$, implying that $G-R$ is connected.

We claim that the set of Gallai vertices in $G$ is $S$.  Since $|S|=k$ and $G-S$ is the disjoint union of $k+2$ copies of $K_t$, it follows that every path in $G$ has at most $|V(G)| - t$ vertices.  Paths in $G$ that achieve this bound contain $S$ and all but one of $X_1,\ldots,X_{k+2}$, implying that $u\in V(G)$ is Gallai if and only if $u\in S$.  By construction, each vertex in $S$ has degree $k(k+2)+(k-1)$.  Hence, when $t$ is sufficiently large, the set of vertices in $G$ of maximum degree is $Y_1\cup \cdots \cup Y_{k+2}$, and none of these is Gallai.

Although maximum degree vertices are not Gallai, our construction still has Gallai vertices. It is natural to ask whether every graph with sufficiently high connectivity has a Gallai vertex \citep{Zam72,Zam01}.  As noted in \Cref{intro}, there are $k$-connected graphs having no Gallai vertices when $k\le 3$. The question remains open for $k\ge 4$.
\end{example}

The complete bipartite graphs $K_{s,s+2}$ show that the condition $\alpha(G) \le \kappa(G)+1$ cannot in general be relaxed to $\alpha(G) \le \kappa(G)+2$ while still guaranteeing existence of Hamiltonian paths \citep{CE72}. However, \Cref{thm:ChvatalErdos} immediately implies that this is possible for sufficiently large regular graphs.

\begin{corollary}\label{hamreg} For each positive integer $k$, there exists $n_0$ such that every $k$-connected regular graph $G$ with $\alpha(G) \leq k+2$ and $n\geq n_0$ vertices has a Hamiltonian path.
\end{corollary}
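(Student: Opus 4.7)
The plan is to derive \Cref{hamreg} as a nearly immediate consequence of \Cref{thm:ChvatalErdos}. Let $n_0$ be the constant from \Cref{thm:ChvatalErdos} for the given $k$, and let $G$ be a $k$-connected regular graph with $\alpha(G) \le k+2$ and $n \ge n_0$ vertices. Because $G$ is regular, we have $d(v) = \Delta(G)$ for every $v \in V(G)$, so every vertex of $G$ qualifies as a vertex of maximum degree.

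Applying \Cref{thm:ChvatalErdos}, each such maximum-degree vertex is a Gallai vertex. Hence every vertex of $G$ lies on every longest path of $G$. Fix a longest path $P$ of $G$; then $V(P) = V(G)$, so $P$ is a Hamiltonian path.

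The only potential subtlety is making sure that the hypotheses of \Cref{thm:ChvatalErdos} are genuinely satisfied, which they are by assumption ($G$ is $k$-connected, $\alpha(G) \le k+2$, and $n \ge n_0$). No additional argument or main obstacle arises: the corollary is simply the observation that in a regular graph ``every max-degree vertex is Gallai'' upgrades to ``every vertex is Gallai,'' which is the same as saying a longest path is Hamiltonian.
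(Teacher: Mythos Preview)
Your proof is correct and matches the paper's own treatment: the paper simply states that \Cref{thm:ChvatalErdos} ``immediately implies'' the corollary, and your argument spells out exactly that immediate implication (regularity makes every vertex a maximum-degree vertex, hence Gallai, hence any longest path is Hamiltonian).
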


We do not know whether the condition $\alpha(G) \leq k+2$ in \Cref{hamreg} is best possible. The following construction from \citep{CO13} shows that it cannot be relaxed to $\alpha(G) \leq k+5$.   

\begin{example}\label{Ex:HamReg} Let $k \geq 6$ be even. Let $G_1$ be $K_{k+1}$ minus an edge and let $G_2$ be $K_{k+1}$ minus a matching on $k-4$ vertices. Let $G$ be the graph obtained from two copies of $G_1$ and one copy of $G_2$ by adding a new vertex adjacent to all $k$ vertices of degree $k-1$. We have that $G$ is a $1$-connected regular graph with $\alpha(G) = 6$ and no Hamiltonian path.  
\end{example}

\section{Concluding remarks and open problems}
\newcommand{\HH}{\mathcal{H}}
In this paper we aimed at characterizing monogenic Gallai families. Let $\HH$ be the set of fixers, and recall that $H\in \HH$ if and only if $\Free(H)$ is a Gallai family. We showed that $\HH$ contains $5P_1$ (\Cref{thm:5P1}) and all linear forests on at most $4$ vertices (\Cref{monogenic}).  Also, $\HH$ is contained in the family of linear forests that are induced subgraphs of $G_0$ (\Cref{prop:linforest}).  It remains open to decide if $H\in \HH$ in finitely many cases:

\begin{question} Let $H$ be a linear forest induced subgraph of $G_0$ such that $5 \le |V(H)| \le 9$ and $H \ne 5P_1$.  Is $\Free(H)$ a Gallai family? 
\end{question}

We believe that $\Free(6P_1)$ provides an affirmative answer (\Cref{conj:indep-5}).  It turns out that $3P_3$ and $P_7 + 2P_1$ are the only linear forest induced subgraphs of $G_0$ on $9$ vertices, and hence the only candidates for $9$-vertex fixers, as shown in the following.

\begin{figure}[h!]
\centering 
\includegraphics[scale=0.6]{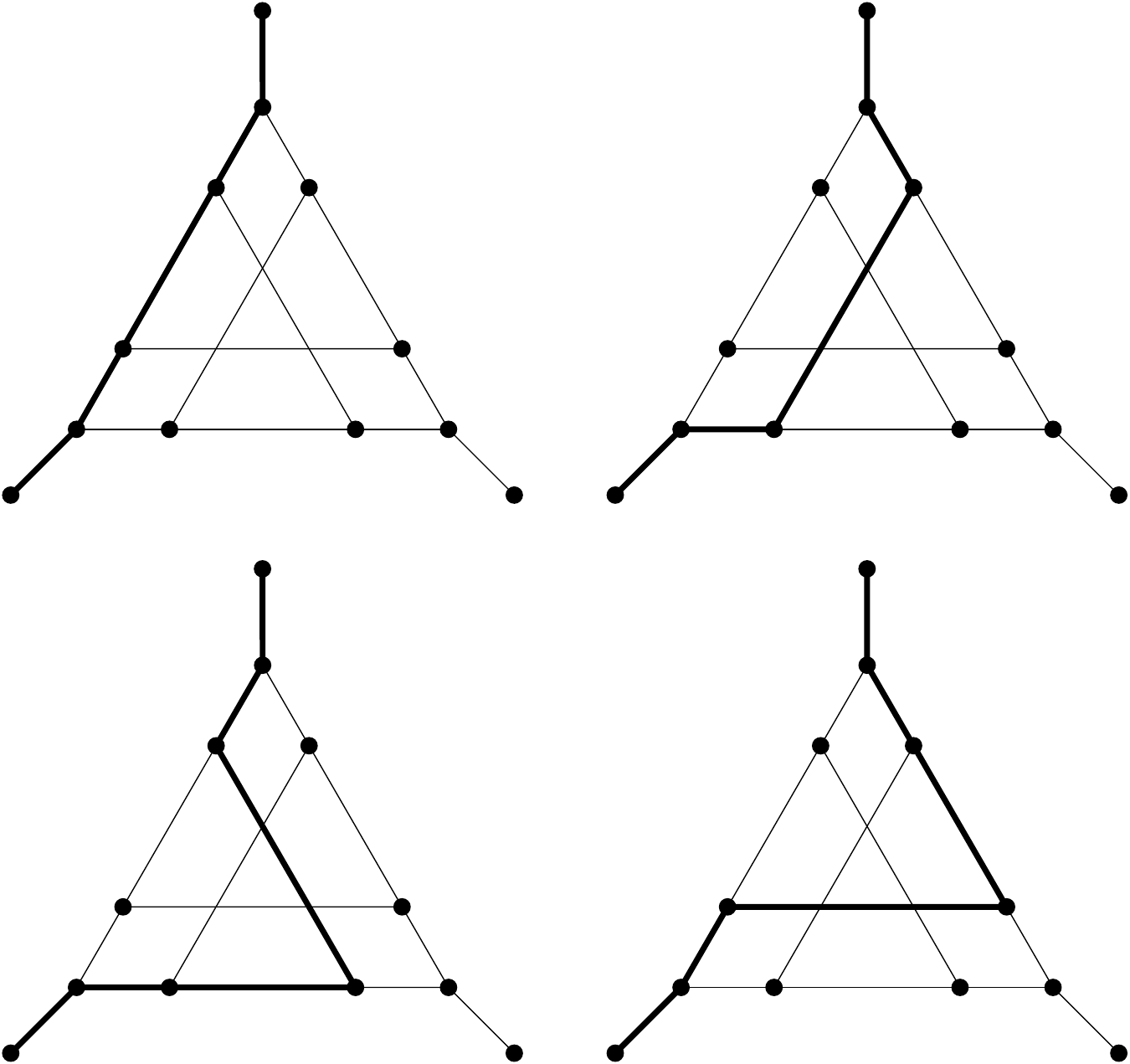}
\caption{The induced paths in $G_0$ containing two degree-$1$ vertices of $G_0$.}
\label{fig:9vert}
\end{figure}

\begin{remark} The graphs $3P_3$ and $P_7 + 2P_1$ are the only $9$-vertex linear forest induced subgraphs of $G_0$. The argument is as follows. Let $H$ be an induced linear forest of $G_0$ on $9$ vertices and let $P=v_{1}\cdots v_{i}$ be a longest path in $H$.

Suppose first that $P$ contains two vertices of degree $1$ in $G_0$.  Since the first $3$ vertices and the last $3$ vertices of $P$ determine $P$, it is easy to see that, up to symmetry, $P$ is one of the bold paths depicted in \Cref{fig:9vert}.  It follows that $H$ is a copy of $P_7 + 2P_1$.

Suppose finally that $P$ contains at most one vertex of degree $1$ in $G_0$. We claim that $i \leq 3$. Indeed, if $i \geq 4$, then $P$ contains at least $i-1 \geq 3$ vertices of degree $3$ in $G_0$, say without loss of generality $v_{1}, v_{2}, v_{3}$.  Note that $v_{1}$ has two neighbors in $V(G_0) - V(H)$ and both $v_{2}$ and $v_{3}$ have one neighbor in $V(G_0) - V(H)$. Since $G_0$ has girth $5$, these neighbors are distinct and so $|V(H)| \leq 12 - 4 = 8$, a contradiction. Suppose now $H$ has $k$ components.  Note that $H$ has $9-k$ edges and $G_0-E(H)$ has $6+k$ edges, each of which has an endpoint in $V(G_0)-V(H)$.  Since $G_0$ is subcubic and $|V(G_0) - V(H)| = 12 - 9 = 3$, it follows that $6+k\le 3\cdot 3$, and so $k \leq 3$.  Hence $H=3P_3$. 
\end{remark}
In \Cref{hamreg}, we observed the following Chv\'atal--Erd\H{o}s type result: for a regular graph $G$, if $\alpha(G) \leq \kappa(G)+2$ and $G$ is sufficiently large in terms of $\kappa(G)$, then $G$ contains a Hamiltonian path. We also observed that we cannot relax $\alpha(G) \leq \kappa(G)+2$ to $\alpha(G) \leq \kappa(G)+5$ and we conclude by asking to determine the best possible condition. 

\bibliographystyle{plainnat}
\bibliography{citations}
\end{document}